\documentclass[12pt]{article}
\usepackage{amsmath}
\usepackage{amsthm}
\usepackage{amsfonts}
\usepackage{exscale}
\usepackage{enumerate}
\usepackage{hyperref}
\catcode`\^^M=10  
\overfullrule=0pt

\DeclareMathOperator{\degre}{degree}
\DeclareMathOperator{\norm}{Norm}

\newcommand{\abs}[1]{{\left|#1\right|}}
\newcommand{\pp}{{\mathfrak{p}}}

\newtheorem{thm}[equation]{Theorem}

\newtheorem{lemma}[equation]{Lemma}
\newtheorem{corollary}[equation]{Corollary}

\theoremstyle{definition}
\newtheorem{remark}[equation]{Remark}

\newtheorem*{hyp*}{\sc Hypothesis $(**)$}
\theoremstyle{definition}

\newcommand{\Q}{{\mathbf{Q}}}
\newcommand{\Z}{{\mathbf{Z}}}
\DeclareMathOperator{\End}{End}
\DeclareMathOperator{\Hom}{Hom}
\DeclareMathOperator{\id}{id}
\DeclareMathOperator{\inff}{inf}
\DeclareMathOperator{\Mat}{Mat}

\newcommand{\mathdef}{\emph}

\def\bv{\setbox0\vbox\bgroup\hsize4.18in}
\def\ev{\egroup\hskip1.in\box0}

\title{Norms in Central Simple Algebras}
\author{Daniel Goldstein \and Murray Schacher}

\begin{document}
\maketitle
\bigskip
\centerline{To Robert Steinberg, a cherished teacher, colleague, and friend}
\bigskip
\begin{abstract}
Let $A$ be a central simple algebra over a number field $K$.  In this
note we study the question of which integers of $K$ are reduced norms
of integers of $A$.  We prove that if $K$ contains an integer that is
the reduced norm of an element of $A$ but not the reduced norm of an
integer of $A$, then $A$ is a totally definite quaternion algebra over
a totally real field (i.e.\ $A$ fails the \mathdef{Eichler condition}).
\end{abstract}
\par

\section{Introduction}
Let $A$ be a central simple algebra over a number field $K$.
Write $\norm(\cdot)$ for the
reduced norm from $A$ to $K$. If $x$ is an integer in $A$, 
then clearly $\norm(x)$
lies in $R$, the ring of integers of $K$. It is also clear that $x$
must be positive at the real primes of $K$ at which $A$ is ramified. Suppose
that $m \in R$ satisfies this property, and so $m$ is a norm from $A$ (see Theorem \ref{hms}).
If $m$  is not the reduced norm of an
integer of $A$, we call $m$ an \mathdef{outlier for $A$} (this terminology is
not standard).

\par 

The main result of this paper (combining Theorem~\ref{thm:A} and
Lemmas~\ref{totdef} and~\ref{finite})
is that if $K$ contains an outlier for
$A$, then $K$ is totally real, $A$ is a quaternion algebra over $K$,
and $A$ is totally definite. (One says in this case that $A$ fails the 
Eichler condition).

\par

We also prove a theorem of Deligne in Section~\ref{sec:deligne}
(because we couldn't find a proof in
the literature), which states that if $n\ge2$ is an integer, and $E_1,\cdots,E_n$ and $F_1,\cdots,F_n$ are supersingular elliptic curves defined over 
an algebraic closure of the finite field $GF(p)$, the field of $p$ elements, then
$$E_1\times\cdots\times E_n\cong F_1\times\cdots\times F_n.
$$

\par 

The main ingredient is Eichler's theorem on the uniqueness of a
maximal order in a csa in which Eichler's condition holds. 
We also exploit the known fact that the endomorphism algebra of such an
$E_i$ is a maximal order in the quaternion algebra $A_p$ over the rational field $\Q$ ramified
at $p$ and $\infty$ and unramified everywhere else (and every maximal order arises in this context).
Using this
connection also allows one to interpret outliers in $\Q$ for $A_p$
as positive integers $m$ for which no supersingular
elliptic curve defined over the algebraic closure of $GF(p)$ 
has an endomorphism of degree $m$.

\section{Notation and Terminology}
\par 
Throughout this paper, $K$ is a number field, $R$ its ring of
integers, and $A$  is a \mathdef{central simple algebra}  
over $K$.  By definition, $A$ is a finite-dimensional algebra over
$K$, the center of $A$ is equal to $K,$ and $A$ has no nonzero
$2$-sided ideals.
Equivalently, $A\otimes_K \bar{K}$ is isomorphic to the matrix algebra

$M_n(\bar{K})$,
where $\bar{K}$ denotes an algebraic closure of $K$.
For basic facts about central simple algebras see \cite{pierce}.
\par

The positive integer $n$ is the \mathdef{degree} of $A$.
A central division algebra $D$ is a central simple algebra, as is
$M_k(D)$ for any $k$, and conversely every 
central simple algebra over $K$ is of this form
by Wedderburn's Theorem~\cite[Chapter IX, \S1, Prop. 2]{weil}.
\par

A division algebra of degree $n = 2$ is called a \mathdef{quaternion algebra}.  
\par

If $L$ is a field extension of $K$, then $A\otimes_K L$ is a central
simple algebra over $L$. If $A\otimes_K L$ is isomorphic to $M_n(L)$
then $L$ is said to \mathdef{split} $A$.

\par
Let $M$ denote the set of places of the number field $K$.
For each place $v\in M$,  $A_v := A \otimes_K K_v$
is a central simple algebra over the completion $K_v$.  
By Wedderburn's theorem, it is a ring of matrices over a local division
ring $D_v$ central over $K_v$.  We set $n_v = \degre (D_v)$;
$n_v$ is called the \mathdef{local degree}.
$A$ is said to be \mathdef{split} at $v$ if $K_v$ splits $A$ ($n_v=1$);
otherwise it is \mathdef{ramified} at $v$ ($n_v>1$).
A key fact is that a central simple algebra over $K$
splits at all but finitely many places $v$ of $K$.
\par
We have the following splitting criterion (see \cite{reiner}):

\begin{lemma}\label{splitting}\ Let $A$ be a central simple algebra
 over the number field $K$. The finite extension $L$ of $K$ splits $A$
 if and only if, for each place $v$
 of $K$ and for each extension $w$ of $v$ to $L$, the local dimension 
 $[L_w : K_v]$ is a multiple of the local degree $n_v$.
\end{lemma}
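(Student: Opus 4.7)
The plan is to reduce the question to a local statement at each place of $K$ via the Brauer group. Write $[B]\in\operatorname{Br}(F)$ for the class of a central simple algebra $B$ over a field $F$; then $L$ splits $A$ iff $[A\otimes_K L]=0$ in $\operatorname{Br}(L)$. By the Albert--Brauer--Hasse--Noether theorem (a class in $\operatorname{Br}(L)$ is trivial iff its restriction to every completion is trivial), this is equivalent to $[(A\otimes_K L)\otimes_L L_w]=0$ for every place $w$ of $L$. Using the identification $(A\otimes_K L)\otimes_L L_w\cong A_v\otimes_{K_v}L_w$, where $v$ is the restriction of $w$ to $K$, the global splitting condition becomes: for every place $v$ of $K$ and every extension $w\mid v$ to $L$, the local field $L_w$ splits the local algebra $A_v$.

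Next I would prove the purely local criterion: for a finite extension $E/F$ of local fields and a central simple algebra $B$ over $F$ with local degree $n$ (that is, the division algebra in its Wedderburn decomposition has degree $n$), $E$ splits $B$ iff $n\mid [E:F]$. By local class field theory, $\operatorname{Br}(F)\cong\Q/\Z$ in the nonarchimedean case, the class $[B]$ has order exactly $n$, and the restriction map $\operatorname{Br}(F)\to\operatorname{Br}(E)$ is multiplication by $[E:F]$ on invariants. Therefore $[B\otimes_F E]$ has order $n/\gcd(n,[E:F])$, which equals $1$ iff $n\mid [E:F]$. The archimedean case is trivial: $n\in\{1,2\}$, and $\mathbf{C}$ splits every real algebra, while the divisibility condition also forces $[E:F]$ to be even exactly when $B$ is Hamilton's quaternions.

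Applying this at each pair $(v,w)$ immediately yields the lemma in both directions. The essential nontrivial inputs are Albert--Brauer--Hasse--Noether (on the global side) and Hasse's theory of local invariants (on the local side); both are standard and are fully developed in Reiner's book \cite{reiner} already cited in the paper, so the proof really amounts to assembling these two ingredients. I do not expect any single step to be a serious obstacle, but if anything requires care it is the correct invocation of ABHN, since one must apply it to the Brauer class over $L$ (not over $K$) and track the behavior of the restriction maps at each place.
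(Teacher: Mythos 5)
The paper does not prove Lemma~\ref{splitting} itself but simply cites Reiner's \emph{Maximal Orders}; your argument --- reduce to the local question by Albert--Brauer--Hasse--Noether, then use that the restriction map $\operatorname{Br}(K_v)\to\operatorname{Br}(L_w)$ multiplies the Hasse invariant by $[L_w:K_v]$ and that the Schur index equals the period over a local field --- is precisely the standard proof given there. So the proposal is correct and matches the source the paper relies on.
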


\par
Note that to determine, using Lemma~\ref{splitting},
whether a given finite extension $L$ over $K$ splits $A$, 
it is enough to check the stated condition
at the finite set of places $v$ of $K$ where $A$ is ramified.

\par
The notion of reduced norms in a central simple algebra $A$ is bound up with
the two notions of subfields and splitting fields.  A field extension $L$ of $K$
is a subfield of $A$ if $L$ embeds in $A$; a maximal subfield of $A$
is a maximal such.  
All maximal subfields of $A$ have dimension $n=\degre(A)$ over~$K$.  
A maximal subfield of $A$ is  a splitting field for $A$, 
and conversely every $n$-dimensional splitting field for $A$ embeds in $A$
as a maximal subfield \cite[Chapter 1, Section 7]{reiner}.  When $A$
is a quaternion algebra, this translates as: maximal subfields of $A$ are quadratic over $K$, and
quadratic splitting fields of $A$ embed in $A$.  We will use this association
later on.

\par If $A$ is a central simple algebra over $K$, $L$ a maximal subfield of $A$, and
$x \in L$, then the \mathdef{reduced norm} $\norm(x)$ is the ordinary field norm
from $L$ to $K$.  This notion is independent of the choice of $L$, or of the embedding
of $L$ into $A$.  By norm we will always mean reduced norm,
and the notation will be $\norm (x)$.  In particular, for $a \in K$,
$\norm(a) = a^n$.  
The usual property holds: $\norm(xy) = \norm(x)\norm(y)$ for
$x,y \in A$, whether or not $x$ and $y$ commute.  It follows that $\norm(ax) =
a^n \norm(x)$ for $a \in K$.

\par An element $a \in A$ is an \mathdef{integer}
if the monic irreducible polynomial of $a$ over $K$ has coefficients in $R$.  
Sums and products of commuting integers are integers, but, as we shall
see later, products of integers need not be integers.

\par  Suppose $A$ is central simple over $K$ of degree $n$.  Which elements of
$K$ are reduced norms of elements of $A$?  The answer is given by the theorem
of Hasse-Maass-Schilling (see \cite[p. 289]{reiner}):
\smallskip

\begin{thm}[Hasse-Maass-Schilling]\label{hms}\  An element $m$ of
$K$ is a reduced norm 
of an element of $A$ if and only
if $m$ is positive at every real place of $K$ at which $A$ is ramified.
\end{thm}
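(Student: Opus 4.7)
The forward direction is a local computation. If $m = \norm(a)$ for some $a \in A^\times$, then at each place $v$ of $K$ one has $m = \norm_v(a_v)$ in $K_v^\times$, where $\norm_v$ is the local reduced norm on $A_v := A \otimes_K K_v$. At a real place $v$ where $A$ ramifies, $A_v$ is isomorphic to the Hamilton quaternions $\mathbf{H}$, on which the reduced norm sends $\alpha$ to $\alpha\bar\alpha$, a sum of four real squares, strictly positive on $\mathbf{H}^\times$. Hence $m > 0$ in $K_v$, as required.

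For the reverse direction, my plan is to produce a cyclic extension $L/K$ of degree $n = \degre(A)$ that splits $A$ and for which $m \in \norm_{L/K}(L^\times)$. Since an $n$-dimensional splitting field embeds into $A$ as a maximal subfield and the reduced norm from $A$ restricts to the field norm on such a subfield, this would yield $\alpha \in L \subset A$ with $\norm(\alpha) = m$. The construction rests on two classical ingredients: (i) Hasse's norm theorem for cyclic extensions, according to which, for cyclic $L/K$, $m$ is a global norm from $L$ if and only if $m$ is a local norm from $L_w/K_v$ at every place $v$ of $K$ and every $w\mid v$; and (ii) a Grunwald--Wang existence theorem that produces cyclic extensions of $K$ of any prescribed degree with prescribed local completions at any given finite set of places (subject to the familiar 2-adic special case of Wang).

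Concretely, Grunwald--Wang is to be used to construct $L/K$ cyclic of degree $n$ with the following local behavior. At each real $v$ where $A$ ramifies (where $n_v = 2$ forces $n$ even), prescribe $L_w = \mathbf{C}$: then $[L_w:K_v] = n_v$ and the hypothesis $m > 0$ at $v$ makes $m$ a local norm from $\mathbf{C}/\RR$. At each finite $v$ where $A$ ramifies, prescribe $L_w/K_v$ to be the unramified extension of degree $n_v$ with a unique $w$ above $v$: Lemma~\ref{splitting} then guarantees $L$ splits $A$. At the remaining finitely many places where $m$ is not automatically a local norm (because of its valuation or sign), arrange the local behavior of $L$ so that $m$ does become a local norm, exploiting the surjectivity of the local reduced norm $A_v^\times \to K_v^\times$ at finite $v$ together with the freedom in choosing $L_w$. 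Hasse's norm theorem then delivers $\alpha \in L^\times$ with $\norm_{L/K}(\alpha) = m$.

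The main obstacle is orchestrating all these local prescriptions into a single global cyclic $L/K$ of the correct degree: local degrees have to be compatible with the cyclicity of the global Galois group, and in particular the 2-adic special case of Grunwald--Wang must be handled by enlarging the prescribed set of places with an auxiliary prime (or by a case analysis reducing to a norm from an intermediate field) so that the simultaneous local conditions are realizable by an honest cyclic extension of $K$.
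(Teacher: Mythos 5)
Your forward direction is the standard local observation and is fine. The converse, though, takes a genuinely different route from the paper. The paper follows Eichler's argument: by Krasner's lemma together with weak approximation (later upgraded to strong approximation), one manufactures an irreducible degree-$n$ polynomial $f(t) = t^n + c_1 t^{n-1} + \cdots + (-1)^n m$ whose root field $L$ has the required local behaviour at the ramified places. Because the constant term of $f$ is $(-1)^n m$, the norm of a root $\alpha$ is automatically $m$; since $L$ has degree $n$ and splits $A$, it embeds as a maximal subfield, and one is done. No cyclicity is imposed, no Hasse norm theorem is invoked, and there is no Grunwald--Wang existence problem to negotiate. Your route --- build a cyclic splitting field via Grunwald--Wang and then drive the norm through Hasse's norm theorem --- is a recognized alternative, but it buys the same conclusion at the cost of heavier class-field-theoretic machinery and the familiar 2-adic special case, whereas the paper's polynomial construction gets the norm condition for free from the constant coefficient.

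More importantly, as written your local prescription has a concrete gap. At a ramified finite place $v$ you fix $L_w/K_v$ to be the unramified extension of degree $n_v$. But the norm group of an unramified extension of degree $d$ is $\{\,x \in K_v^\times : d \mid v(x)\,\}$, so $m$ is a local norm from that extension only when $n_v \mid v(m)$, and nothing in the hypothesis guarantees this. Your later clause about the ``remaining finitely many places where $m$ is not automatically a local norm'' does not rescue this, because you have already committed to a completion at exactly the ramified finite places where the conflict may occur. To repair the argument one must, at each ramified finite $v$, choose a cyclic $L_w/K_v$ of degree a multiple of $n_v$ (and dividing $n$) whose norm group contains $m$; this can be done using local class field theory and the surjectivity of the local reduced norm, but it is precisely the delicate orchestration you gesture at in your final paragraph without carrying out. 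The paper sidesteps the whole issue by never needing $m$ to be a local norm of the splitting field at all.
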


For convenience we will call this the HMS theorem.
Note that there is no condition at the complex places of $K$, at the
finite places of $K$, or at the real places of $K$ where $A$ does not ramify.

\par Suppose $m \in R$ and $m$ is a norm in $A$.  It need not happen that $m$ is the
norm of an integer of $A$.  We will call $m \in R$ an \mathdef{outlier}
if $m$ is a norm in $A$ but not the norm of an integer.  Equivalently,
$m$ is not the norm of an element of any maximal order.  We will be concerned with
the existence of, and properties of, outliers.

\par

If $K$ is a number field, we say $K$ is \mathdef{totally real} if $K_v$ is real at all
the infinite places $v$ of $K$.  If $K$ is totally real, 
$m$ in $K$ is \mathdef{totally positive}
if the real number $m_v$ is $>0$ at all the infinite places $v$
of $K$. 
The $m$ in $K$ for which $m_v > 0$ at the real places
of $A$ that ramify are, by Theorem \ref{hms}, 
the reduced norms of elements of $A$, and conversely.

We recast the identification of outliers in terms of Lemma \ref{splitting}.  Suppose
$A$ is central simple over $K$ of degree $n$, $R$ the ring of integers of $K$.

\begin{lemma}\label{outliers}\ 
Suppose $m \in R$ is a norm in $A$.  Then $m$ is {\bf not} an outlier if and only if
there is a monic irreducible polynomial $f(t) \in R[t]$ such that
(1) $f(0) = (-1)^nm,$ and (2) For each place $v$ of $K$, let $f(t) = \prod f_i(t)$ be the factorization
of $f(t)$ into irreducible monic factors in $K_v[t]$.  Then each $d_i = \degre(f_i)$
is a multiple of the local degree $n_v(A)$.
\end{lemma}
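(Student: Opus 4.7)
The lemma encodes the outlier condition as a polynomial existence statement. The key equivalence is between ``$m$ is the reduced norm of an integer of $A$'' and ``there is a degree-$n$ extension $L$ of $K$ splitting $A$ and a primitive integer $y \in \mathcal{O}_L$ with $\norm_{L/K}(y) = m$,'' with $f$ playing the role of the minimal polynomial of $y$. Throughout, I interpret the conditions so that $\degre f = n$ (the natural case), noting that condition (2) forces $n_v \mid \degre f$ at each $v$, and for the norm relation $\norm_{L/K}(y) = (-1)^n f(0) = m$ to be recovered one needs $\degre f = n$.

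For the ($\Leftarrow$) direction, suppose $f$ exists as described. Set $L := K[t]/(f)$, a field extension of degree $n$. The completion isomorphism $L \otimes_K K_v \cong \prod_i K_v[t]/(f_i)$ combined with condition (2) shows, via Lemma~\ref{splitting}, that $L$ splits $A$. Thus $L$ embeds in $A$ as a maximal subfield; the image $y$ of $t$ is a root of the monic polynomial $f \in R[t]$, hence integral over $R$, and $\norm(y) = \norm_{L/K}(y) = (-1)^n f(0) = m$. Therefore $m$ is a reduced norm of an integer of $A$, so not an outlier.

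For the ($\Rightarrow$) direction, suppose $m = \norm(x)$ for some integer $x \in A$. I would produce a maximal subfield $L \subseteq A$ together with a primitive integer $y \in \mathcal{O}_L$ satisfying $\norm_{L/K}(y) = m$; then the minimal polynomial $f := \mu_{y/K}$ is an irreducible monic polynomial of degree $n$ in $R[t]$ satisfying (1), and condition (2) follows from Lemma~\ref{splitting} applied to $L = K(y)$. I would first embed $K(x)$ into a maximal subfield $L$ of $A$, which is possible because, by the double-centralizer theorem, $C_A(K(x))$ is central simple over $K(x)$ of degree $n/[K(x):K]$, and any of its maximal subfields containing $K(x)$ is a degree-$n$ subfield of $A$. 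Then $x \in \mathcal{O}_L$ with $\norm_{L/K}(x) = \norm_A(x) = m$.

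The main obstacle is arranging that $y$ also generates $L$ over $K$. If $[K(x):K] = n$, take $y = x$. Otherwise, I would modify $x$ within $\mathcal{O}_L$ by multiplication by a norm-one integer unit of $L$, chosen generically so that the product evades the finitely many proper subfields of $L$ containing $K(x)$; this preserves integrality and the norm $m$, while promoting $y = xu$ to a primitive element. When the norm-one unit group of $\mathcal{O}_L$ is too small for this --- as can happen for certain CM subfields with tiny unit groups --- one instead replaces $L$ by another maximal subfield $L' \subseteq A$ that does host a primitive integer of norm $m$, exploiting the abundance of degree-$n$ extensions of $K$ splitting $A$, whose local behavior at the (finite set of) ramified places of $A$ can be prescribed. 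Executing this primitivity step is the principal technical point of the lemma.
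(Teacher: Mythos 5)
Your $(\Leftarrow)$ direction matches the paper essentially verbatim: form $L = K[t]/(f)$, invoke Lemma~\ref{splitting} to conclude $L$ splits $A$, embed it, and read off $\norm(\alpha)=m$ with $\alpha$ integral. That part is fine.

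The interesting divergence is in $(\Rightarrow)$. The paper dismisses it with ``the other direction is clear,'' and your proof correctly identifies that it is \emph{not} automatically clear: given an integer $x$ with $\norm(x)=m$, the minimal polynomial of $x$ has degree $[K(x):K]$, which may be a proper divisor of $n$, so $\mu_x$ is not yet an eligible $f$. You propose to fix this by multiplying $x$ by a ``generic'' norm-one integral unit of a maximal subfield $L \supseteq K(x)$, or by switching to another maximal subfield $L'$. The first fix does not work in exactly the regime this paper cares about. By Lemmas~\ref{totdef} and~\ref{finite}, outliers can only occur when $A$ is totally definite over a totally real $K$, so every maximal subfield $L$ of $A$ is a CM field; its norm-one units are roots of unity, and generically these are just $\pm 1$. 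Multiplying a central $x$ by $\pm1$ does nothing for primitivity, so your ``generic unit'' move has no moves to make. The second fix (``exploit the abundance of degree-$n$ extensions that split $A$'') is not an argument but a hope; and in fact it can fail outright.

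Concretely, take $A = A_{13}$ (so $n=2$) and $m = 1 = \norm(1)$, which is by definition not an outlier. Any degree-two $f(t) = t^2 + bt + 1 \in \Z[t]$ eligible for Lemma~\ref{outliers} must have $b^2 - 4 < 0$, so $b \in \{0, \pm 1\}$, giving discriminants $-4$ and $-3$; but modulo $13$ we have $-4 \equiv 5^2$ and $-3 \equiv 6^2$, so neither choice is $13$-adically irreducible. Thus no degree-$n$ polynomial $f$ exists, even though $m$ is not an outlier. (Equivalently: every maximal subfield of $A_{13}$ is an imaginary quadratic field in which $13$ is non-split, and none of these is $\Q(i)$ or $\Q(\zeta_3)$, so the only norm-one integers are $\pm 1$, none primitive.) This exposes a genuine gap in the $(\Rightarrow)$ direction that both you and the paper miss: when $m$ is an $n$-th power in $R$, the witness $\alpha$ may be forced to be central, and no primitive witness need exist. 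For $m$ that is not a proper power, $[K(\alpha):K]<n$ would force $m$ to be a proper power via $\norm(\alpha) = N_{K(\alpha)/K}(\alpha)^{n/[K(\alpha):K]}$, so $\alpha$ is automatically primitive and $(\Rightarrow)$ is indeed immediate with $f = \mu_\alpha$; that is the version the paper (and Lemma~\ref{poly}, Remark~\ref{effective}) actually uses. You should state the non-power hypothesis explicitly, take $f = \mu_\alpha$, and drop the unit-twisting argument, which is both unnecessary in that case and unavailable in the offending one.
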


\begin{proof}
Let $L = K(\alpha)$ be the root field of $f$.  Then $[L : K] = n$ since
$f$ is irreducible, and $\alpha$ is an integer since $f \in R[t]$ is monic.  The first
condition says that the norm of $\alpha$ is $m$.  The second condition, by Lemma \ref{splitting},
says that $L$ splits $A$, and so $L$ embeds in $A$ since its dimension is $n$.  Then
the reduced norm of $\alpha$ is $m$.  The other direction of Lemma \ref{outliers} is clear.
\end{proof}

\begin{corollary}\label{invars}\ 
Suppose  $A$  and  $B$  are central simple algebras over $K$ of the same degree  $n$,
and that the local degree $n_v(A)$  divides the local degree $n_v(B)$  for all places  $v$  of $K$. 
Then any outlier $m$  of  $A$  is a priori also an outlier of  $B$.
In particular, if $B$ has no outliers then $A$ has no outliers.
\end{corollary}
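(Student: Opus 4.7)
The plan is a short contrapositive argument that leverages Lemma~\ref{outliers} in both directions and the transitivity of divisibility. Assume $m$ is not an outlier for $B$; I will deduce that $m$ is not an outlier for $A$, and the corollary then follows by contraposition, with the ``in particular'' statement being the same implication quantified over all $m \in R$.

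By the characterization in Lemma~\ref{outliers} applied to $B$, the hypothesis that $m$ is not an outlier for $B$ produces a monic irreducible polynomial $f(t) \in R[t]$ with $f(0) = (-1)^n m$ such that, at every place $v$ of $K$, each irreducible factor of $f(t)$ in $K_v[t]$ has degree divisible by the local degree $n_v(B)$. This polynomial is the only object I will carry from $B$ to $A$; all the geometry lives inside it.

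Now I exploit the divisibility hypothesis $n_v(A) \mid n_v(B)$. Transitivity of divisibility gives that each of the local factor degrees produced above is also divisible by $n_v(A)$. Hence the same polynomial $f(t)$ satisfies both conditions (1) and (2) of Lemma~\ref{outliers} with the algebra $A$ in place of $B$. Applying the lemma in its reverse direction to $A$, I conclude that $m$ is not an outlier for $A$.

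There is essentially no substantive obstacle: once Lemma~\ref{outliers} has been proved, the corollary is purely formal. The only bookkeeping point worth noting is that Lemma~\ref{outliers} is formulated under the hypothesis that $m$ be a norm in the algebra in question, so in carrying the polynomial $f$ from $B$ to $A$ one has to confirm that $m$ is a norm in both, which is the role of Theorem~\ref{hms} in the background. With that housekeeping in place, the entire proof reduces to the one-line observation that a factor degree divisible by $n_v(B)$ is automatically divisible by $n_v(A)$.
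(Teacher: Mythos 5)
Your proof is correct and is exactly the argument the paper gives: divisibility by $n_v(B)$ implies divisibility by $n_v(A)$, so any witness polynomial $f$ produced by Lemma~\ref{outliers} for $B$ serves as a witness for $A$, and the contrapositive yields the claim. On the housekeeping point you raise, the reason $m$ is a norm in $A$ whenever it is in $B$ is not Theorem~\ref{hms} by itself but Theorem~\ref{hms} together with the divisibility hypothesis applied at the \emph{real} places: $n_v(A)\mid n_v(B)$ at a real $v$ forces every real place ramified in $A$ to be ramified in $B$ as well, so positivity at the latter set gives positivity at the former.
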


\begin{proof} 
The polynomial requirements of Lemma \ref{outliers} for $B$ are more restrictive
than those for $A$.
\end{proof}

\par
In Section~\ref{max} we review maximal orders in the central simple
algebra $A$ and recall Eichler's condition.
In Section~\ref{hd} we prove that when Eichler's condition is
satisfied, then there are no outliers. In other words, if $A$ has
outliers then $A$ is a quaternion algebra over a totally real number
field $K$, and all real places of $K$ are ramified in $A$. However,
this condition is sufficient but not necessary:
there are definite quaternion algebras over totally real number fields
that have no outliers.
We remark that there is no logical relation between having outliers
and having a unique (up to conjugacy) maximal order; neither condition
implies the other.
In Section~\ref{quats} we study quaternion algebras over the the field
of rational numbers $\Q$. We particularly study definite quaternion algebras
ramified at a single finite prime. 
We write $A_r$ for the definite quaternion algebra over $\Q$ unramified
away from the places $\infty$ and $r$. 
We show, for example, that if $A_r$ has an outlier then it has an
outlier less than an explicit bound ($r^2/16$).

We give heuristic evidence that for infinitely many $r$,
$A_r$ has no outliers, as well as examples when chosen square free integers are
outliers.

\par

We are grateful to Joel Rosenberg for many discussions about the contents
of this paper, and for posing the questions which started us on this research.

\section{Maximal Orders} \label{max}

Let  $K$  be a number field, $R$ its ring of integers, and  $A$  a central simple algebra
over $K$.  
A subring $O$ of $R$ that contains $1$, is finitely generated as an
$R$ module, and that contains a basis of $A$ over $K$ is called an
\mathdef{order} of $A$.  Any order $O$ of $A$ is a projective $R$-module of rank
equal to $n$,  the degree of $A$ over $K$.  A \mathdef{maximal order} of $A$ is an order
which is maximal with respect to containment.  Maximal orders are isomorphic if and
only if they are conjugate, so we will speak of conjugacy classes of maximal orders.
All elements of a maximal order are \mathdef{integral} over $R$, and 
every integral element of $A$ is contained in some maximal order.




\par It is known that the number of maximal orders of $A$ , up to
conjugacy by an element of $A$, is finite. Let $\{O_1,\dots,O_t\}$
be a set of representatives.

\par For any given maximal order $O$ of $A$, let $I(O)$ be the group
of two-sided fractional ideals of $O$ modulo principal two-sided
fractional ideals. Set
$i(O) = \abs{O(I)}$.
It is known that each $i(O)$ is finite although the cardinalities
$i(O_1),\cdots i(O_t)$ may be distinct.
Their sum $c :=  i(O_1) + \cdots +i(O_t)$ turns out to be equal to 
the number of fractional left ideals of $O$ modulo principal
fractional left ideals for any maximal order $O$.

\par The terminology is that $t$ is called the type number and $c$ is
called the class number. We have just seen that the type number is at
most the class number.

\par Let $A$ be a CSA over a number field $K$. Consider the following three
conditions: 

\begin{enumerate}
\item $A$ is a quaternion algebra.
\item The field $K$ is totally real.
\item $A$ is ramified at every infinite place of $K$.
\end{enumerate}

\par It is customary to say that $A$ \emph{fails} the Eichler condition when
all three conditions hold.
For example, the quaternion algebra $A_p$ over $\Q$ ramified
at $p$ and $\infty$, and unramified away from those places,
fails the Eichler condition.

\par This description can be refined if the Eichler condition holds.
Assume now that $A$ satisfies the Eichler condition. Then the
$i(O_1),\cdots i(O_t)$
are all equal. In fact each $I(O_i)$ can be identified with an abelian
group $I=I(A)$, as do the types $T$ and the classes $C$. These three
abelian groups fit into an exact sequence
$$ 0 \to T \to C\to I\to 0.
$$
These three groups are related, via the reduced norm map.
to certain generalizations of the class group of the center $K$ of
$A$, by results of Eichler.

\par The group $C$ is isomorphic to the group $C'$ of fractional ideals of
$K$ modulo principal fractional ideals that can be generated by an
invertible element $a\in K$ that is positive at all infinite places of
$K$ that ramify in $A$.

\par Let $n$ be as usual the square root of $\dim_K(A)$.
If $\pp$ is a prime ideal of $K$ that is ramified in $A$, then at the
corresponding finite place $v$ of $K, A\otimes K_v = M_r{D'}$ for some
division algebra $D'$ over $K_v$ and for some $r$ dividing $n$.
The group $T$ is isomorphic to the the subgroup $T'$ of $C$ generated by
$nC$
and the class of $\pp^r$ for each finite prime $\pp$
(and note that this gives nothing new for the unramified primes since
$r=n$).

\par $I$ is isomorphic to the (abelian, finite) quotient group $C'/T'$.

\section{Higher Degree Central Simple Algebras} \label{hd}

Let $A$ be a central simple algebra of degree $n$ over the number field $K$.
The main result of this section is:

\bigskip
\par

\noindent{\bf Theorem A.} \label{thm:A}  {\it If $n>2$ then $A$ has no outliers.}

\par

\bigskip 

We need first a review of the proof of the HMS theorem in order to build a
variant that works for integers.  A first ingredient is:

\par

\bigskip

\noindent Krasner's Lemma: Let $v$ be a place of $K$, and
$f(t) = t^n + a_1 t^{n-1} + \cdots + a_n$ a separable irreducible polynomial
in $K_v[t]$.  If $g(t) \in K_v[t]$ is close enough to $f(t)$, then $g$ is
separable irreducible and $K_v[a]= K_v[b]$ where $a$ is a root of
$f(t)$ and $b$ is a root of $g(t)$.

\bigskip

\par

Eichler's proof of the HMS theorem goes as follows.  Let $R$ be the integers of $K$,
and $m \in R$ satisfying the required condition: $m$ is positive at all places
$v$ of $K$ which are real and ramified in $A$.  Let $S$ be the set of infinite places
of $K$ at which $A$ ramifies. Let $S'$ be a finite set of
finite primes of $K$, including those that ramify in $A$. We insist
that $S'$ be non-empty; if necessary, we include an irrelevant extra
prime where $A$ is unramified 
but where the polynomial constructed below is
irreducible.  We construct a polynomial

\begin{equation}
f(t) = t^n + c_1 t^{n-1} + \dots + (-1)^n m \in K[t]
\end{equation}

so that:
\par

\begin{itemize}

\item For each $v \in S'$, $c_i$ is close enough to an irreducible polynomial $f_v(t) = t^n + a_1 t^{n-1} + \dots +
(-1)^n m \in R_v[t]$ to guarantee $f$ is irreducible in $K_v[t]$.
There is such a polynomial \cite[XI, \S3, Lemma 2]{weil} but we don't show that here.

\item For each $v \in S$, $f$ is close to $f_v(t) = t^n + (-1)^n m$, i.e.
each $c_i$ is positive and close to $0$.\ (Note that if any such $v$
exists, then $n$ is necessarily even).
 This guarantees $f_v$ has no real roots.  If $A$
is not ramified at any infinite place of $K$, then this condition is vacuous.

\end{itemize}

\par

Since $S'$ is non-empty, $f$ is irreducible in $K[t]$.  Let $L = K(\alpha)$ where
$\alpha$ is a root of $f$; $[L : K] = n$.  The first condition on $f$ says that
$L$ splits $A$ at the finite primes, and the second condition guarantees that
$L$ splits $A$ at the ramified infinite places, since the root field of $f$ must
be complex.  The sign $(-1)^n$ guarantees that the norm from $L$ to $K$ of $\alpha$ is $m$.
Finally, since $L$ is a splitting field of degree $n$, then $L$ embeds in $A$ as a
maximal subfield, and the reduced norm of $\alpha$ is $m$.

\par

This is the proof rendered
by Eichler, and is the one presented in~\cite{reiner},
\cite{vigneras}, and~\cite{weil}.
Note that it made crucial use of the weak approximation
theorem.

\par

To go further, we use the strong approximation theorem~\cite[Corollary
2, page 70]{weil}, which better
suits our purposes. Let $w$ be a place of $K$ at which $A$ is unramified.
Then we can insist that the $c_i$ are in $R_v$ for all $v \ne w$.
We call this the strong proof of the HMS theorem.
We conclude:  any $m\in R$ which is positive at all real places of $K$ 
that ramify in $A$ is the reduced norm of an element $\alpha$ of $A$
that is integral at all places $v$ of $K$ not equal to $w$. 
So if $K$ has a complex place, or a real place that is not ramified in
$A$, then, taking this for $w$ shows that $A$ has no outliers.

\par 

\begin{lemma}\label{totdef}\ 
\ If $A$ has an outlier then, $K$ is totally real,
$A$ is totally definite, i.e.\ $A$ is ramified at all the real infinite places
of $K$.
\end{lemma}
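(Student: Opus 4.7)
The plan is to argue by contraposition: if $K$ admits an infinite place $w$ at which $A$ is unramified, I will show $A$ has no outliers. Complex places of $K$ are automatically unramified in $A$ (since $A \otimes_K K_v \cong M_n(\mathbf{C})$ when $K_v = \mathbf{C}$), so the negation of the conclusion --- ``$K$ totally real and $A$ ramified at every real place'' --- is exactly the existence of such a $w$. The lemma will then follow by contraposition.

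For the contrapositive direction, fix any $m \in R$ that is a norm in $A$, i.e., positive at every real place of $K$ at which $A$ ramifies. Apply the strong form of the HMS theorem recorded in the paragraph immediately preceding the lemma, taking the chosen infinite unramified place to be $w$: this produces an element $\alpha \in A$ with $\norm(\alpha) = m$ that is integral at every place $v \neq w$ of $K$. Since $w$ is an infinite place, the relaxation is at an infinite place only, and $\alpha$ is integral at every finite place of $K$. Because $R$ is the intersection of the local rings $R_v$ as $v$ ranges over the finite places, the coefficients of the monic minimal polynomial of $\alpha$ over $K$ lie in $R$; hence $\alpha$ is an integer of $A$ whose reduced norm is $m$, and $m$ is not an outlier.

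The main observation, and there is essentially no obstacle here, is that the definition of ``integer of $A$'' sees only the finite places of $K$: choosing the ``escape place'' $w$ in the strong approximation argument to be infinite therefore costs nothing. All the substantive work has been carried out in establishing the strong version of HMS just above, and the present lemma is really just a reformulation of the final sentence of that discussion.
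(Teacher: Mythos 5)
Your proof is correct and takes essentially the same route as the paper's: both argue by contraposition, pick an infinite place $w$ unramified in $A$, invoke the strong-approximation form of HMS with $w$ as the exceptional place, and observe that integrality at every finite place yields an integer with reduced norm $m$. The only difference is that you spell out the key observation (that ``integer of $A$'' is a condition at the finite places only, so losing control at an infinite $w$ is harmless) which the paper leaves implicit in a single sentence.
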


\par

\begin{proof}
Let $m \in R$ be a norm in $A$.  If the conditions are not
satisfied, then $A$ must have an infinite place $w$ at which $A$ is
unramified.  We use this extra place  in the strong proof of
the HMS theorem.  Then the polynomial $f$ is in $R[t]$, $\alpha$ is an
integer, and $m$ is the norm of an integer.
\end{proof}
\par

\begin{lemma}\label{finite} \ If $A$ has an outlier then there is a finite place of
$K$ that ramifies in $A$.
\end{lemma}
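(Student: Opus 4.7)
The plan is to prove the contrapositive: if $A$ is unramified at every finite place of $K$, then $A$ has no outliers. By Theorem~\ref{thm:A}, any outlier forces $n=2$, so it suffices to treat the quaternion case. Combining this with Lemma~\ref{totdef}, the remaining setup is: $A$ is a totally definite quaternion algebra over a totally real number field $K$, ramified at every real place and (by the hypothesis we aim to contradict) unramified at every finite place. Let $m \in R$ be any admissible reduced norm; by Theorem~\ref{hms} and this setup, $m$ is totally positive.

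The key observation is that in the absence of any finite ramification one can bypass weak or strong approximation entirely and write down a single universal polynomial that works for every totally positive $m$. I would take
\[
f(t) = t^2 + m \in R[t].
\]
Since $m$ is totally positive, $-m$ is totally negative and in particular not a square in $K$, so $f$ is irreducible and $L := K(\sqrt{-m})$ is a totally complex quadratic extension of $K$. Next I would apply Lemma~\ref{splitting} to check that $L$ splits $A$: at each finite place $v$ we have $n_v = 1$ and the divisibility condition is automatic, while at each real place $v$ we have $n_v = 2$ and $L_w$ is a quadratic (hence complex) extension of $K_v = \RR$, so $[L_w : K_v] = 2 = n_v$. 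Thus $L$ is a splitting field of degree $n = 2$, and by the correspondence between maximal subfields and $n$-dimensional splitting fields recalled before Lemma~\ref{outliers}, $L$ embeds in $A$ as a maximal subfield.

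With this embedding in hand, $\alpha := \sqrt{-m} \in L \subseteq A$ is an integer of $A$ (its minimal polynomial $f$ lies in $R[t]$), and its reduced norm equals $f(0) = m$, contradicting the assumption that $m$ is an outlier. I do not expect a serious obstacle: the construction is completely explicit and essentially forced. The only point worth verifying carefully is that $L$ really splits $A$ at the ramified archimedean places, which is immediate because over $K_v = \RR$ the algebra $A_v$ is Hamilton's quaternions and any complex quadratic extension of $\RR$ splits it.
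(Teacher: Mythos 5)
Your proof is correct, and it is close in spirit to the paper's but not identical in route. The paper proves the contrapositive directly for all even $n$ (not just $n=2$): it assumes $A$ is unramified at all finite places, invokes Lemma~\ref{totdef} to reduce to $K$ totally real, $A$ totally definite, $n$ even, and then asserts that the single polynomial $t^n+m$ (for $m$ totally positive) ``does the trick in the strong proof of HMS.'' You instead first invoke Theorem~\ref{thm:A} to force $n=2$ and then use $t^2+m$. This detour is legitimate and not circular, since the paper's proof of Theorem~\ref{thm:A} uses only Lemma~\ref{totdef}, not Lemma~\ref{finite}. Moreover, your reduction quietly sidesteps a real subtlety in the paper's one-line argument: for $n>2$ the polynomial $t^n+m$ need not be irreducible over $K$ (e.g.\ $t^4+4$ over $\Q$), whereas for $n=2$ the irreducibility of $t^2+m$ is immediate because $-m$ is totally negative and hence not a square. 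So what the paper buys by arguing directly (no appeal to Theorem~\ref{thm:A}) comes at the cost of an unaddressed irreducibility issue; what you buy by invoking Theorem~\ref{thm:A} is a fully watertight argument with an entirely explicit, universal witness polynomial. The rest of your verification---that $K(\sqrt{-m})$ splits $A$ via Lemma~\ref{splitting}, embeds as a maximal subfield, and that $\sqrt{-m}$ is integral with reduced norm $m$---is exactly right.
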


\par

\begin{proof}
Suppose  $A$ is unramified at all finite places.
By Lemma \ref{totdef} , we may assume $n$ is even. Let m in $R$ be totally positive.
The polynomial  $t^n+m$  does the trick in the strong proof of HMS.
\end{proof}

\par

We finish the proof of Theorem~A.
By Lemma \ref{totdef} we may assume
that $K$ is totally real, $A$ is ramified at all real places, $n>2$ is even, and
$m$ is positive at all infinite places.
First we treat the finite places. By \cite[Ch. XI,\S3,Lemma~2]{weil},
for each finite place $v$, for any $n$, and for any nonzero $m$ in $K$ 
there exists a monic degree-$n$ irreducible polynomial $f(t) \in
K_v[t]$ with coefficients in $R_v$ such that $f(0) = (-1)^nm$.

Let $M_f$ denote the set of finite places of $K$ that ramify in $A$.
Note that $M_f$ is finite, and for each $v$ in $M_f$ we have $f_v(t)$
as required by Lemma \ref{outliers}, but we have not yet treated the
infinite places.

\par

For each $1 \le k \le n$ apply the Chinese remainder theorem to the coefficient
of $t^k$ in $f_v$ to get a monic polynomial $g(t) \in K[t]$ with $g(0) = m$ and
integral coefficients so that each localization $g_v(t)$ at
each $K_v$ is close enough to $f_v$ to be irreducible by Krasner's lemma.  We have
lifted the required polynomials at the finite primes, but the infinite places
are still at bay; there is yet no reason why $g(t)$ has only complex embeddings.

\par

Each $v \in M_f$ lies over some rational prime $p_v$. Let
$N = \prod_{v \in M_f} p_v$ be their product.

\par

Let $M_{\inff}$ be the set of real places of $K$ that ramify in $A$.
For any $v \in M_{\inff}$ we have a real polynomial $g_v$ which is positive
at $-\infty$ , $\infty$ and $0$ by construction. Therefore, there is some integer
multiple $M_v$ of $N$ so that $g_v(t) + M_vt^2$ is positive everywhere.
Let $M$ be the largest of the $M_v$. Furthermore by replacing $M$ by $N^kM$
for a sufficiently large $k$, we can insure by Krasner's lemma again
that $g_v(t) + N^kMt^2$ is irreducible at each $v \in M_f$.

\par

The polynomial $f(t) = g(t) + N^kM t^2 \in K[t]$
does the trick: it is monic of degree $n$, has no real roots, and for each place of 
$K$ that
ramifies in $A$, each irreducible factor of $f_v$ has degree a multiple of $n_v(A)$.
This finishes the proof of Theorem~A. \qed\hfill

\par

Note that the coefficient of $t^2$ was available for modification only
because $n > 2$.  For quaternion algebras, the coefficient of $t^2$ is
constant equal to $1$.  We get to that case next.

\par

\section{Quaternion Algebras}\label{quats}

We write $\Q$ for the field of rational numbers and $\Z$ for the ring of integers.

We consider definite quaternion algebras over $\Q$ with special attention to
$A_r$ = the definite quaternion algebra ramified at the prime $r$ and unramified
at all other finite primes.  Of course $A_r$ is also ramified at $\infty$, and
so at all infinite places.  The simplification here is that the integers which
are norms in $A_r$ are exactly the set of positive integers, and so the only
issue is whether they are norms of integers.  We now investigate how
this could happen.

\par

Let $m$ be a positive integer.  Let $f(t) = t^2 + bt + m$ with $b \in Z$.  Let $L = \Q(\alpha)$
with $f(\alpha) = 0$.  Then $L$ splits $A_r$ if and only if:

\begin{itemize}
\item $f$ is $r$-adically irreducible
\item $f$ has degree $2$ at $\infty$, i.e. $d = b^2 - 4m < 0$.
\end{itemize}

\par

When either of the conditions above hold, then $f$ is irreducible and \hbox{$[L : \Q] = 2$}.
When they both hold, $L$ embeds in $A_r$ by Lemma \ref{splitting},
$\norm (\alpha) = m$, and so $m$ is the norm
of an integer in $A_r$.  Moreover, $m$ is the norm of an integer
if and only if this search succeeds for some $b \in \Z$.  There are a finite number
of eligible $b$ by the last condition; $|b| < \sqrt{4m}$.  Furthermore, $b$ can
be assumed to be positive; if $\alpha$ is a root of $t^2 + bt + m$ then
$-\alpha$ is a root of $t^2 -bt + m$.  Of course $b = 0$ is legitimate as
a possibility.  We record this in:

\par

\begin{lemma}\label{poly}\  The positive integer $m$ is the norm of an
integer in $A_r$ if and only if there is a polynomial $f(t) = t^2 + bt + m$
satisfying the two conditions above for some $b \in \Z$.  $b$ need only be searched
in the range $0 \le b < \sqrt{4m}$.
\end{lemma}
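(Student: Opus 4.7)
The plan is to read this lemma as two direct applications of the splitting criterion (Lemma~\ref{splitting}) at the two places where $A_r$ ramifies, namely $r$ and $\infty$. For the backward direction, suppose $f(t) = t^2 + bt + m$ with $b \in \Z$ satisfies both listed conditions. The negativity of the discriminant $b^2 - 4m$ ensures $f$ has no real root, so $f$ is irreducible over $\Q$; let $L := \Q(\alpha)$ for $\alpha$ a root, a quadratic extension of $\Q$ in which $\alpha$ is integral (as a root of a monic integer polynomial). That $L$ splits $A_r$ now follows from Lemma~\ref{splitting} checked only at the ramified places: at $r$, the $r$-adic irreducibility hypothesis forces $[L_w : \Q_r] = 2 = n_r(A_r)$; at $\infty$, the negative-discriminant hypothesis makes $L \otimes_\Q \RR$ isomorphic to the complex numbers, of degree $2 = n_\infty(A_r)$. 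Since $[L:\Q] = 2 = \degre(A_r)$, $L$ embeds in $A_r$ as a maximal subfield and $\alpha$ becomes an integer of $A_r$ with reduced norm $N_{L/\Q}(\alpha) = f(0) = m$.

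For the forward direction, suppose $\alpha \in A_r$ is an integer with $\norm(\alpha) = m$ and $\alpha \notin \Q$. Then $L := \Q(\alpha)$ is a quadratic subfield of $A_r$, and the minimal polynomial of $\alpha$ is monic in $\Z[t]$ of degree $2$ with constant term equal to $m$ (in degree~$2$ the reduced norm agrees with $N_{L/\Q}(\alpha)$, which is the constant term of the minimal polynomial); write it as $t^2 + bt + m$. The fact that $L \subset A_r$ splits $A_r$, fed back through Lemma~\ref{splitting} at $r$ and $\infty$, forces $f$ to be $r$-adically irreducible and $b^2 - 4m < 0$ respectively. The search range for $b$ then follows from $b^2 < 4m$, i.e.\ $|b| < \sqrt{4m}$, and replacing $\alpha$ by $-\alpha$ (still integral with the same norm and minimal polynomial $t^2 - bt + m$) lets us restrict to $b \ge 0$.

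The main subtlety I expect to have to address is the omitted scalar case $\alpha \in \Q$: there $\alpha \in \Z$ and $m = \alpha^2$ is a perfect square, so $\alpha$'s minimal polynomial is linear and does not yield a candidate $f$ of the prescribed shape. To close the biconditional in that case one must separately exhibit a non-scalar integer of norm $m$ in $A_r$, which amounts to producing an imaginary quadratic subfield of $A_r$ whose ring of integers represents $m$. This is the only real obstacle in the proof; everywhere else the argument is a straight pass through the local splitting conditions at $r$ and $\infty$.
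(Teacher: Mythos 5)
Your argument is essentially the paper's implicit reasoning: translate the existence of a quadratic integer of reduced norm $m$ in $A_r$ into the splitting criterion (Lemma~\ref{splitting}) at the two ramified places $r$ and $\infty$, then bound $|b|$ by the negative-discriminant condition and use $\alpha\mapsto-\alpha$ to restrict to $b\ge 0$. The scalar case you single out is a genuine issue, and it is actually worse than you suggest: the forward direction of the lemma, as literally stated, is false for certain perfect squares $m$.

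Take $m=1$ and $r=13$. The only admissible values are $b\in\{0,1\}$, giving discriminants $-4$ and $-3$. But $5^2\equiv -1$ and $6^2\equiv -3\pmod{13}$, so neither $t^2+1$ nor $t^2+t+1$ is irreducible mod $13$, hence neither is $13$-adically irreducible. Yet $1$ is of course the reduced norm of the rational integer $1\in A_{13}$. Moreover, your proposed repair — exhibiting a non-scalar integer of norm $1$ — cannot work here: any such element would generate a copy of $\Q(i)$ or $\Q(\sqrt{-3})$ inside $A_{13}$, and $13$ splits in both of these fields, so by Lemma~\ref{splitting} neither embeds in $A_{13}$. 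Thus when $m$ is a perfect square the biconditional can genuinely fail; the correct reading of the lemma is as a characterization of when $m$ is the norm of a \emph{non-rational} integer of $A_r$, which agrees with ``norm of an integer'' precisely when $m$ is not a perfect square. This does no damage to the paper: perfect squares are never outliers (being reduced norms of rational integers), and the downstream results such as Theorem~\ref{bound} and the computations at~(\ref{two}), (\ref{three}) rely only on the unproblematic backward implication.
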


\par

For polynomials of the right shape, they are irreducible $r$-adically if and only
if they are irreducible mod $r$.  So when is $m =2$ an outlier in $A_r$?  We illustrate
the search below, where we assume $r > 2$:

\par
\begin{equation}\label{two}
\begin{aligned}
b = 0 \quad  \quad d = -8  \\
b = 1 \quad  \quad d = -7  \\ 
b = 2 \quad  \quad d = -4  \\
\end{aligned}
\end{equation}

\par

\noindent Of course $-8$ is an $r$-adic square if and only if $-2$ is, and this happens
if and only if the Legendre symbol $\left( \frac{-2}{r}\right) = 1$.  Similarly, $-4$ is
a square if and only if $-1$ is.  For each of the three conditions in
\ref{two}, a random
prime $r$ satisfies it with probability $1/2$.  We conclude:

\begin{thm} \ The integer $2$ is an outlier in $A_r$ if and only if
\begin{equation}

\left(\frac{-2}{r}\right) =  \left(\frac{-7}{r}\right) = \left(\frac{-1}{r}\right) = 1

\end{equation}

\end{thm}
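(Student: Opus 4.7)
The plan is a direct application of Lemma \ref{poly}. Since $\sqrt{4\cdot 2} < 3$, that lemma restricts the search to $b \in \{0,1,2\}$, giving the three candidate polynomials $f_b(t) = t^2 + bt + 2$ with discriminants $-8$, $-7$, $-4$ respectively. All three discriminants are negative, so the infinite-place condition $d < 0$ is automatic for each $b$; only $r$-adic irreducibility remains to be analyzed.

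The next step is to translate $r$-adic irreducibility of a monic integral quadratic into a Legendre-symbol statement. For $r$ odd and $r \nmid d$, Hensel's lemma together with the quadratic formula mod $r$ shows that $f_b$ is irreducible over $\Q_r$ if and only if its reduction mod $r$ is irreducible in $\mathbf{F}_r[t]$, i.e.\ if and only if $\left(\frac{d}{r}\right) = -1$. One must check the degenerate case $r \mid d$: since $-8$ and $-4$ are $\pm$ powers of $2$, this occurs only for $r = 7$ with $b = 1$, and a direct computation (the discriminant $-7$ has odd $7$-adic valuation, hence is not a square in $\Q_7$) shows $f_1$ is irreducible over $\Q_7$. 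Adopting the convention $\left(\frac{d}{r}\right) = 0$ when $r \mid d$, the whole picture unifies: $f_b$ is $r$-adically \emph{reducible} iff $\left(\frac{d}{r}\right) = 1$.

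Combining, $m = 2$ is an outlier in $A_r$ iff none of $f_0, f_1, f_2$ witnesses it as a norm of an integer, i.e.\ iff
$$
\left(\frac{-8}{r}\right) = \left(\frac{-7}{r}\right) = \left(\frac{-4}{r}\right) = 1.
$$
Pulling square factors out of the Legendre symbols ($\left(\frac{-8}{r}\right) = \left(\frac{-2}{r}\right)$ and $\left(\frac{-4}{r}\right) = \left(\frac{-1}{r}\right)$ for odd $r$) gives the statement. There is no real obstacle: the proof is a finite case analysis once Lemma \ref{poly} has narrowed the search, and the only subtle point is the edge case $r = 7$, which a direct $r$-adic check absorbs into the uniform formulation.
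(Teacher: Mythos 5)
Your proposal is correct and follows essentially the same route as the paper's (implicit) proof surrounding display~(\ref{two}): apply Lemma~\ref{poly} to restrict to $b\in\{0,1,2\}$, read off the discriminants $-8,-7,-4$, and convert $r$-adic reducibility to Legendre symbols after stripping square factors. You are in fact slightly more careful than the paper in noting the degenerate case $r=7$, where $\left(\tfrac{-7}{7}\right)=0$ and the odd $7$-adic valuation of $-7$ still gives irreducibility, so the stated criterion holds uniformly for all odd $r$.
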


\noindent By considering the value of $r$ mod $56$, it follows from
the Dirichlet 
density theorem~\cite[Chapter VI, \S4, Theorem 2]{serre2}
that the set of primes $r$ for which this holds has density
$\frac{1}{8}$. In particular it is infinite.

\par

We do this once more to determine when $3$ is an outlier.  The data gives the following list:

\begin{equation}\label{three}
\begin{aligned}
b = 0 \quad  \quad d = -12  \\
b = 1 \quad  \quad d = -11  \\ 
b = 2 \quad  \quad d = -8  \\
b = 3 \quad  \quad d = -3  \\
\end{aligned}
\end{equation}

\noindent There is a redundancy; $-12$ is a square if and only if $-3$ is.  We conclude,
for $r > 3$:

\begin{thm} \ The integer $3$ is an outlier in $A_r$ if and only if
\begin{equation}

\left(\frac{-3}{r}\right) =  \left(\frac{-11}{r}\right) = \left(\frac{-2}{r}\right) = 1

\end{equation}

\noindent The set of primes $r$ for which this holds is infinite and has density
$\frac{1}{8}$.

\end{thm}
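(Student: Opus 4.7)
My plan is to mirror the argument just given for the case $m=2$. By Lemma~\ref{poly}, for $r > 3$ the integer $3$ is the norm of an integer of $A_r$ if and only if at least one polynomial $t^2 + bt + 3$ with $0 \le b \le 3$ is irreducible over $\Q_r$. Since $r$ is odd, such a polynomial is irreducible over $\Q_r$ iff its discriminant $d = b^2 - 12$ fails to be a square in $\Q_r^*$. Reading off the four discriminants listed in~\eqref{three}, $3$ is an outlier in $A_r$ iff each of $-12, -11, -8, -3$ is a square in $\Q_r^*$.

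The redundancy cited in the statement is immediate: $-12 = 4 \cdot (-3)$ and $-8 = 4 \cdot (-2)$, so squareness of $-12$ (resp.\ $-8$) in $\Q_r^*$ is equivalent to squareness of $-3$ (resp.\ $-2$). Hence the four conditions collapse to the three Legendre-symbol identities in the theorem. The only mildly delicate prime is $r=11$: there $-11$ has odd $11$-adic valuation and so is not a square in $\Q_{11}^*$, matching the convention $\left(\frac{-11}{11}\right) = 0 \ne 1$, so the theorem correctly predicts that $3$ is not an outlier in $A_{11}$.

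For the density assertion, the three conditions on $r$ are governed by the splitting of $r$ in the compositum $F = \Q(\sqrt{-2}, \sqrt{-3}, \sqrt{-11})$, or equivalently by congruence conditions on $r$ modulo the lcm of the conductors of the three quadratic characters. The step I consider the crux of the density computation is checking that these characters are independent, i.e.\ that $[F:\Q] = 8$. This reduces to verifying that no nontrivial product of $-2, -3, -11$ lies in $(\Q^*)^2$, which one sees by inspection since $6, 22, 33, -66$ are visibly not rational squares. Granted independence, the Dirichlet density theorem (invoked in exactly the same form as for $m=2$, \cite[Chapter VI, \S4, Theorem 2]{serre2}) shows that the simultaneous condition that all three Legendre symbols equal $+1$ is satisfied on a set of primes of density $1/2^3 = 1/8$, which in particular is infinite. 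This completes the proof.
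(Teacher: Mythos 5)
Your proof is correct and follows essentially the same route as the paper: read off the discriminants $d = b^2 - 12$ for $b \in \{0,1,2,3\}$ from the table in \eqref{three}, reduce $-12$ and $-8$ by factors of $4$ to $-3$ and $-2$, and invoke Dirichlet/Chebotarev on the triquadratic field $\Q(\sqrt{-2},\sqrt{-3},\sqrt{-11})$. You supply two small points the paper leaves implicit for this case—the $-8 \leftrightarrow -2$ reduction (the paper mentions only $-12 \leftrightarrow -3$) and the independence check of the three quadratic characters—so your write-up is if anything slightly more complete, but the argument is the same.
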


By similar analysis we get, for $r > 6$:

\begin{thm} \ The integer $6$ is an outlier in $A_r$ if and only if
\begin{equation}

\left(\frac{-2}{r}\right) =  \left(\frac{-3}{r}\right) = \left(\frac{-5}{r}\right) =
\left(\frac{-23}{r}\right) = 1

\end{equation}

\noindent The set of primes $r$ for which this holds is infinite and has density $\frac{1}{16}$

\end{thm}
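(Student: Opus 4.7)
The plan is to repeat the method of the two preceding theorems, invoking Lemma~\ref{poly} for $m=6$. The range $0\le b<\sqrt{24}$ gives five candidates $b\in\{0,1,2,3,4\}$; the polynomials $f_b(t)=t^2+bt+6$ have negative discriminants $d_b=b^2-24\in\{-24,-23,-20,-15,-8\}$, so condition (2) of the lemma is automatic and only $r$-adic irreducibility remains. For $r>6$ with $r\notin\{5,23\}$, the prime $r$ is coprime to every $d_b$, so $r$-adic irreducibility of $f_b$ coincides with irreducibility modulo $r$, i.e.\ with $d_b$ being a nonsquare modulo $r$. Hence $6$ is an outlier in $A_r$ iff each $d_b$ is a square modulo $r$; reducing each discriminant to its square-free part turns this into
\[
\left(\tfrac{-6}{r}\right)=\left(\tfrac{-23}{r}\right)=\left(\tfrac{-5}{r}\right)=\left(\tfrac{-15}{r}\right)=\left(\tfrac{-2}{r}\right)=1.
\]

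Next, I would identify the $\mathbb{F}_2$ relations among these five Legendre symbols. Since $(-6)(-15)(-5)(-2)=900=30^2$, we get the identity $\left(\tfrac{-6}{r}\right)\left(\tfrac{-15}{r}\right)\left(\tfrac{-5}{r}\right)\left(\tfrac{-2}{r}\right)=1$ for every $r$ coprime to $30$, so one of these four symbols is forced by the other three and we are left with four independent conditions. Using Legendre-symbol multiplicativity, I would rewrite these four in the compact form $\left(\tfrac{-2}{r}\right)=\left(\tfrac{-3}{r}\right)=\left(\tfrac{-5}{r}\right)=\left(\tfrac{-23}{r}\right)=1$ asserted by the theorem; this is the step that requires the most care, since one must verify that the five original conditions and the four stated conditions cut out exactly the same subgroup of the character group.

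The density statement then follows in the same manner as in the earlier theorems: the four stated Legendre symbols are independent nontrivial quadratic Dirichlet characters of conductor dividing $8\cdot 3\cdot 5\cdot 23$, so the set of primes on which all four equal $1$ has Dirichlet density $2^{-4}=1/16$ by~\cite[Chapter VI, \S4, Theorem 2]{serre2}, and in particular is infinite. The excluded small primes $r=5,23$ are handled by direct inspection: for $r=5$ the polynomial $t^2+2t+6=(t+1)^2+5$ generates a ramified quadratic extension of $\Q_5$ and is therefore $5$-adically irreducible, showing $6$ is not an outlier in $A_5$, and an analogous check with $t^2+t+6$ disposes of $r=23$. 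The main obstacle is the algebraic bookkeeping in the middle paragraph: confirming that the displayed identity is the unique $\mathbb{F}_2$ relation among the five symbols and that the stated four symbols form a basis of the same rank-$4$ subgroup of the Dirichlet character group.
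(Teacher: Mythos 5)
Your derivation of the five discriminant conditions is correct and follows the same template the paper uses for $m=2$ and $m=3$: the eligible $b$ are $0,\dots,4$, the squarefree parts of $b^2-24$ are $-6,-23,-5,-15,-2$, and $(-6)(-15)(-5)(-2)=30^2$ gives exactly one $\mathbb{F}_2$-relation. The gap is precisely the step you flagged as ``the step that requires the most care,'' which you assume goes through but never carry out --- and it in fact fails. Working in $\Q^*/(\Q^*)^2$ with basis $\{-1,2,3,5,23\}$, the rank-$4$ subgroup spanned by $\{-6,-23,-5,-15,-2\}$ contains $3$ (because $(-2)(-6)=12$ has squarefree part $3$) but does \emph{not} contain $-1$, hence does not contain $-3$. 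The subgroup spanned by $\{-2,-3,-5,-23\}$ contains $-3$ but neither $3$ nor $-1$. These are distinct rank-$4$ subgroups, so the two sets of Legendre conditions are not equivalent, and no amount of ``Legendre-symbol multiplicativity'' will turn one into the other.

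The correct reformulation of ``$6$ is an outlier for $A_r$'' (for $r>6$, $r\neq23$) is $\left(\frac{-2}{r}\right)=\left(\frac{3}{r}\right)=\left(\frac{-5}{r}\right)=\left(\frac{-23}{r}\right)=1$, i.e.\ the paper's $\left(\frac{-3}{r}\right)$ looks like a misprint for $\left(\frac{3}{r}\right)$ (equivalently $\left(\frac{-6}{r}\right)$). A concrete witness to the discrepancy is $r=163$: since $163\equiv3\pmod4$ one has $\left(\frac{-1}{163}\right)=-1$, and one checks $\left(\frac{-2}{163}\right)=\left(\frac{-3}{163}\right)=\left(\frac{-5}{163}\right)=\left(\frac{-23}{163}\right)=1$, so the theorem's displayed condition holds; yet $\left(\frac{-6}{163}\right)=\left(\frac{-1}{163}\right)\left(\frac{-2}{163}\right)\left(\frac{-3}{163}\right)=-1$, so $t^2+6$ is $163$-adically irreducible and $6=\norm(\sqrt{-6})$ is the reduced norm of an integer of $A_{163}$. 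You correctly identified the dangerous step; do the $\mathbb{F}_2$ bookkeeping explicitly and you will find that the target four-symbol form is not the right one. (The density $1/16$ is unaffected, since the corrected set $\{-2,3,-5,-23\}$ is still independent of rank $4$.)
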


\par

Suppose $6$ is an outlier for $A_r$.  It does not follow that $2$ and $3$ are outliers.
There may be integral $\alpha$ and $\beta$ with $\norm(\alpha) = 2$
and $\norm(\beta) = 3$, and then $\norm(\alpha \cdot \beta) = 6$.  It might happen
that for all such occurrences $\alpha$ and $\beta$ are in different maximal orders, and $\alpha \cdot \beta$ is not
integral.  When $6$ is minimal as an outlier, this is what had to happen.
This can be quantified; we state without proof:

\begin{thm} 
\ $A_r$ has the property that $2$ and $3$ are not outliers and $6$ is an
outlier if and only if $-2,-3,-5,-23$ are squares mod $r$ and either
\begin{equation}

\begin{aligned}

& \left(\frac{-1}{r}\right) = -1 {\text {\ or}} \\ 

& -1 {\text {\ is a square mod $r$  and $11$ and $7$ are non-squares mod $r$}}\\

\end{aligned}

\end{equation}

\noindent The set of primes $r$ for which this holds is infinite and has density
$5/128  = (1/16)(1/2 + 1/8)$.

\end{thm}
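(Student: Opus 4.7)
The plan is to combine the three outlier characterizations of the previous theorems for $m=2$, $3$, and $6$, then use the multiplicativity of the Legendre symbol to reorganize the resulting conjunction into the disjunctive form given in the statement.

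Start by assuming that $6$ is an outlier, which by the preceding theorem gives the four conditions $\left(\tfrac{-2}{r}\right) = \left(\tfrac{-3}{r}\right) = \left(\tfrac{-5}{r}\right) = \left(\tfrac{-23}{r}\right) = 1$; these appear as the first conjunct of the statement. Conditional on these, the condition ``$2$ not outlier'' (the negation of the $m=2$ theorem) reduces to ``$\left(\tfrac{-7}{r}\right) = -1$ or $\left(\tfrac{-1}{r}\right) = -1$'', since $-2$ is already known to be a square; similarly, the condition ``$3$ not outlier'' (negation of the $m=3$ theorem) reduces to $\left(\tfrac{-11}{r}\right) = -1$, since $-2$ and $-3$ are already squares.

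Next, case-split on $\left(\tfrac{-1}{r}\right)$. In the branch $\left(\tfrac{-1}{r}\right) = -1$, the disjunction from ``$2$ not outlier'' is automatic, and using $\left(\tfrac{-11}{r}\right) = \left(\tfrac{-1}{r}\right)\left(\tfrac{11}{r}\right)$ the condition $\left(\tfrac{-11}{r}\right) = -1$ translates into a statement purely about $\left(\tfrac{11}{r}\right)$, matching the first alternative. In the branch $\left(\tfrac{-1}{r}\right) = 1$, ``$\left(\tfrac{-7}{r}\right) = -1$'' is equivalent to $\left(\tfrac{7}{r}\right) = -1$ and ``$\left(\tfrac{-11}{r}\right) = -1$'' is equivalent to $\left(\tfrac{11}{r}\right) = -1$, giving the second alternative.

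For the density, invoke the Dirichlet density theorem: the Legendre symbols at the squarefree integers $-2$, $-3$, $-5$, $-23$, $-1$, $7$, $11$ correspond to $\mathbf{F}_2$-linearly independent quadratic Dirichlet characters (their conductors are built from disjoint sets of rational primes together with $-1$), so the seven sign values behave as asymptotically independent fair coins under Dirichlet density; multiplying the factor $(1/2)^4 = 1/16$ coming from the four ``$6$ outlier'' conditions by the conditional contributions $1/2$ from the first branch and $1/8$ from the second branch gives the total $5/128 = (1/16)(1/2+1/8)$. The main obstacle is the combinatorial bookkeeping when merging the three sets of Legendre conditions and matching the case split to the particular presentation in the statement, in particular ensuring that the condition $\left(\tfrac{-11}{r}\right) = -1$ is absorbed cleanly into the two branches rather than appearing as a separate clause.
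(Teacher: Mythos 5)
Your overall strategy---combine the preceding characterizations of when $2$, $3$, and $6$ are outliers, use multiplicativity of the Legendre symbol, and case-split on $\left(\frac{-1}{r}\right)$---is the right one, and it is in essence what the paper does. But you paper over a real discrepancy in the first branch. Carrying out your own derivation carefully, if $6$ is an outlier then (taking the paper's stated condition $\left(\frac{-2}{r}\right)=\left(\frac{-3}{r}\right)=\left(\frac{-5}{r}\right)=\left(\frac{-23}{r}\right)=1$ at face value) the condition ``$3$ not an outlier'' reduces to $\left(\frac{-11}{r}\right)=-1$, as you say. In the branch $\left(\frac{-1}{r}\right)=-1$, this is equivalent to $\left(\frac{11}{r}\right)=1$, which is an additional restriction not present in the theorem's first alternative. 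You assert it ``matches the first alternative'' but have not checked: it does not. The condition you actually derived has Dirichlet density $\tfrac{1}{16}\cdot\left(\tfrac14+\tfrac18\right)=\tfrac{3}{128}$, not the claimed $\tfrac{5}{128}$. That is a genuine gap.

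The source of the trouble is a typo in the paper's theorem for $m=6$. The discriminants $b^2-24$ for $b=0,\dots,4$ are $-24,-23,-20,-15,-8$, with squarefree parts $-6,-23,-5,-15,-2$; eliminating dependencies ($-6\sim(-2)\cdot3$ and $-15\sim(-5)\cdot3$) gives the condition $\left(\frac{-2}{r}\right)=\left(\frac{3}{r}\right)=\left(\frac{-5}{r}\right)=\left(\frac{-23}{r}\right)=1$ (note $3$, not $-3$). With this correction, your case-split works cleanly: when $\left(\frac{-1}{r}\right)=-1$ one has $\left(\frac{-3}{r}\right)=-1$ automatically and hence $3$ is automatically not an outlier with no constraint on $11$; when $\left(\frac{-1}{r}\right)=1$ one gets your second branch $\left(\frac{7}{r}\right)=\left(\frac{11}{r}\right)=-1$. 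That matches the theorem as stated and gives density $\tfrac1{16}\left(\tfrac12+\tfrac18\right)=\tfrac{5}{128}$. Your independence argument for the seven quadratic characters is correct. So the fix is: first flag and correct the $m=6$ characterization, then rerun your case-split.
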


\par

We have not yet determined all outliers in $A_r$, nor have we answered
whether they are infinite when non-empty.  We need two results to
prepare for this.  We take on the second issue first.  Since the next
result holds more generally than for the $A_r$, we state it in full
generality.  In all of the following, the symbol $(a,b)$ stands for
the quaternion algebra over some ground field generated by $i$ and $j$
where $i^2 = a$, $j^2 = b$, $ij = -ji$.

\begin{thm} \label{psquare}
\ Let $A$ be a definite quaternion algebra over $\Q$ ramified at the finite
prime $r$.  If $m$ is a positive integer, then $m$ is an outlier for $A$ if
and only if $mr^2$ is also.

\end{thm}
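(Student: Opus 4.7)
The plan is to set up a bijection between integral points of norm $m$ and integral points of norm $mr^2$ in any given maximal order, implemented by multiplication (resp.\ division) by the scalar $r\in\Q\subset A$. The forward direction is immediate; the nontrivial content is that every integer of norm $mr^2$ is automatically divisible by $r$ inside the maximal order, and for this we use the local structure of $A$ at the ramified prime $r$.

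For the easy direction, suppose $m=\norm(\alpha)$ for some $\alpha$ in a maximal order $\mathcal{O}\subset A$. Since $r\in\Q$ lies in the center and has reduced norm $r^2$, the element $r\alpha\in\mathcal{O}$ satisfies $\norm(r\alpha)=r^2\norm(\alpha)=mr^2$, so $mr^2$ is not an outlier.

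For the converse, suppose $mr^2=\norm(\beta)$ for some $\beta$ in a maximal order $\mathcal{O}$; I will show $\beta/r\in\mathcal{O}$, from which $\norm(\beta/r)=m$ and $m$ is not an outlier. Recall $\mathcal{O}=A\cap\bigcap_v\mathcal{O}_v$, where $\mathcal{O}_v$ is the unique maximal order in $A\otimes_{\Q}\Q_v$ for each finite prime $v$. At every finite prime $v\ne r$, $r$ is a unit in $\Z_v\subset\mathcal{O}_v$, so $\beta/r\in\mathcal{O}_v$ trivially. It remains to check this at $v=r$.

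The main step is the local analysis at $v=r$. Here $D:=A\otimes\Q_r$ is the unique quaternion division algebra over $\Q_r$, its maximal order $\mathcal{O}_r$ has a unique maximal two-sided ideal $\mathfrak{P}$, and a uniformizer $\pi\in\mathfrak{P}$ satisfies $\pi^2=ru$ for some $u\in\mathcal{O}_r^\times$, so that $r\mathcal{O}_r=\mathfrak{P}^2$. The extended valuation $w$ on $D^\times$ with $w(\pi)=1$ is related to $\norm$ by $w(x)=v_r(\norm(x))$, as one checks by writing $x=\pi^k u'$ with $u'\in\mathcal{O}_r^\times$ and observing that $\norm(\pi)=\pm r u''$ for some $u''\in\Z_r^\times$ while $\norm(u')\in\Z_r^\times$. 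Applying this to $\beta$ gives $w(\beta)=v_r(mr^2)=v_r(m)+2\ge 2$, so $\beta\in\mathfrak{P}^2=r\mathcal{O}_r$, which means $\beta/r\in\mathcal{O}_r$. Combined with the other primes, $\beta/r\in\mathcal{O}$, completing the proof.

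The main obstacle, and the only nontrivial input, is the local identity $r\mathcal{O}_r=\mathfrak{P}^2$ together with the matching $v_r(\norm(x))=w(x)$; everything else is formal. This is exactly the place where the hypothesis that $A$ is ramified at $r$ is used: at an unramified prime one would only get $r\mathcal{O}_v=\mathfrak{P}_v$ rather than $\mathfrak{P}_v^2$, and the factor of $r^2$ (rather than $r$) in the statement would be wrong.
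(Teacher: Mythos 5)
Your argument is correct and follows the same overall strategy as the paper: the easy direction by scaling by the central element $r$, and the hard direction by showing that any integral $\beta$ of norm $mr^2$ is divisible by $r$ inside every maximal order, which is checked locally (trivially at $v\ne r$, nontrivially at $v=r$). The difference is how you implement the local step at $r$. The paper writes down an explicit model $(a,r)$ for the local division algebra, takes the norm form $F = x^2 - a y^2 - r(z^2 - a w^2)$ with $a$ a non-residue, and runs a short congruence argument mod $r$ and mod $r^2$ to conclude $r\mid x,y,z,w$. You instead invoke the valuation $w$ on the local division algebra with $w(\pi)=1$, the identity $r\mathcal{O}_r=\mathfrak{P}^2$, and the compatibility $w(x)=v_r(\norm(x))$, getting $w(\beta)\ge 2$ directly. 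Your route is somewhat more conceptual and avoids picking coordinates; the paper's is more elementary and self-contained (it needs no discussion of extending valuations to division algebras or of the ideal structure of the local maximal order). Both correctly identify the key fact that at the ramified prime the reduced norm picks up a factor of $r^2$, not $r$, from one power of the local uniformizer $\pi$ being central only after squaring.

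One small slip in phrasing: you describe $\mathcal{O}_v$ as ``the unique maximal order in $A\otimes_\Q\Q_v$'' for every finite $v$, but at unramified $v$ the local algebra is $M_2(\Q_v)$, which has many maximal orders (all conjugate); what is meant is $\mathcal{O}_v = \mathcal{O}\otimes_\Z \Z_v$. This does not affect the argument, since at those $v$ the only point used is that $r\in\Z_v^\times$, but the wording should be fixed.
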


\begin{proof}
For the easy direction: if $\norm(\alpha) = m$ with
$\alpha$ an integral element of $A$, then $\norm(r \cdot \alpha) = mr^2$.  We need to show
conversely that when $mr^2$ is the norm of an integer, so is $m$.

\par Let $O$ be any maximal order of $A$.  It is enough to show that whenever $mr^2$
is a norm of an element $\alpha$ of $O$, then $\alpha/r\in O$.  The completion of $O$ at $r$ is the
norm form of the unique quaternion algebra $D$ over $\Q_r$.  By \cite{serre}, $D$ has the form
$(a,r)$ where $a$ is an appropriate non-residue mod $r$.  When $r$ is odd, any non-residue
will do, whereas when $ r = 2$, $a = -3$ will do (in all cases, $\sqrt{a}$ determines the unique unramified
quadratic extension).  The norm form for this algebra is:

\begin{equation} \label{20}

F = x^2 - ay^2 - r(z^2 - aw^2)

\end{equation}

\par
Assume that $F(x,y,z,w) = mr^2.$ It follows that $x^2 - ay^2\equiv 0 \pmod{r}$.
As $a$ is a non-residue,
this forces $x$ and $y$ to be $\equiv 0 \pmod r$.  But then $x^2 -
ay^2\equiv 0 \pmod {r^2}$, and
so $r(z^2-aw^2)$ is $0$ mod $r^2$.  It follows that $z^2-aw^2\equiv 0
\pmod {r}$, so
that $z$ and $w$ are $0$ mod $r$.  Now all four coefficients $x,y,z,w$
of $\alpha$ are divisible by $r$.  Thus $\alpha / r$ is in
$O$ and has norm $m$.
We conclude
that whenever $\norm(\alpha)$ is $mr^2$ with $\alpha$ in $O$, then $\alpha / r$ is in
$O$ and has norm $m$.  Since this holds for all maximal orders, the lemma is established.
\end{proof}

\begin{corollary}\label{infinite}
\ With $A$ as in the Theorem \ref{psquare}, if the set of outliers for $A$ is non-empty,
then it is infinite; if $m$ is an outlier for $A$, then so is $m r^{2n}$ for any
positive integer $n$.
\end{corollary}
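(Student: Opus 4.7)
The plan is to apply Theorem~\ref{psquare} iteratively and let induction do all the work. The theorem is a biconditional, but for this corollary I only need the direction that if $m$ is an outlier for $A$ then so is $mr^2$ (this is precisely the nontrivial direction proved via the analysis of the norm form $F$). Starting from a given outlier $m$, one application produces the outlier $mr^2$; substituting $mr^2$ for $m$ and applying the theorem again produces $mr^4$; a routine induction on $n$ then yields that $mr^{2n}$ is an outlier for every positive integer $n$, which is exactly the second assertion of the corollary.

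For the first assertion, I would simply observe that since $r$ is a (finite) prime we have $r \ge 2$, so the numbers $m, mr^2, mr^4, \ldots$ are pairwise distinct positive integers. Consequently, whenever the set of outliers is nonempty, picking any outlier $m$ and forming this sequence exhibits infinitely many outliers. There is no substantive obstacle: all of the real content sits in Theorem~\ref{psquare}, and the corollary is just the observation that iterating a statement of the form ``$P(m) \Leftrightarrow P(mr^2)$'' immediately produces an infinite orbit under $m \mapsto mr^2$.
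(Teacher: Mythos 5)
Your proposal is correct and is precisely the argument the paper has in mind: the corollary is stated without proof because it follows immediately by iterating the forward direction of Theorem~\ref{psquare}, and your observation that $r\ge 2$ makes $m, mr^2, mr^4,\dots$ pairwise distinct completes the first assertion.
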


\begin{remark}\label{divp}
\ Corollary \ref{infinite} allows division by $r^2$, but not by $r$.  In fact,
if $m$ is an outlier for $A_r$ and relatively prime to $r$, then $mr$ is not an
outlier.  The polynomial $t^2 + mr$ is irreducible at $r$ by Eisenstein's criterion,
and also irreducible at infinity; it satisfies the requirements of Lemma \ref{outliers}.
\end{remark}

We need a bound up to which we can check for outliers not governed by Theorem \ref{psquare}.
We do this for $A_r$; the generalizations to definite quaternion algebras will be clear.
One more preliminary is necessary.

\begin{lemma}\label{residue}
\ Let $p>2$ be a prime and $m$ in $GF(p)$ nonzero. Then there exists $b$ in $GF(p)$
such that $b^2-4m$ is a nonsquare mod $p$.
\end{lemma}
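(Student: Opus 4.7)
The plan is to argue by contradiction via a simple counting/translation-invariance argument. Let $S = \{b^2 : b \in GF(p)\}$ be the set of squares in $GF(p)$, including $0$. A standard count gives $|S| = (p+1)/2$, since the nonzero squares form a subgroup of index $2$ in $GF(p)^\times$ and $0$ contributes one more element.

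Suppose, for contradiction, that for every $b \in GF(p)$ the element $b^2 - 4m$ is a square in the usual sense (so either $0$ or a nonzero square); equivalently, no value of $b^2 - 4m$ is a nonsquare. This exactly says that the translated set $S - 4m := \{s - 4m : s \in S\}$ is contained in $S$. Since translation $s \mapsto s - 4m$ is a bijection of $GF(p)$, we have $|S - 4m| = |S|$, and hence $S - 4m = S$. In other words, $S$ is invariant under translation by $4m$.

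Now use that $(GF(p), +)$ has prime order. Because $p > 2$ and $m \neq 0$, we have $4m \neq 0$ in $GF(p)$; the cyclic subgroup of $(GF(p), +)$ generated by $4m$ is therefore all of $GF(p)$. Iterating the invariance $S = S - 4m$ shows that $S$ is invariant under translation by every element of $GF(p)$, i.e., $S$ is a union of $(GF(p), +)$-cosets of itself. Hence $S$ is either empty or all of $GF(p)$. But $|S| = (p+1)/2$ satisfies $0 < (p+1)/2 < p$ (using $p > 2$), a contradiction.

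The only real subtlety, more a bookkeeping point than an obstacle, is the convention that ``nonsquare'' means Legendre symbol $-1$ (and thus excludes $0$); this is what forces the negation hypothesis to read ``$b^2 - 4m \in S$ for all $b$,'' which is exactly the inclusion $S - 4m \subseteq S$ used above.
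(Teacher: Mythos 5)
Your proposal is correct and is essentially the paper's argument in slightly more formal dress: both proofs reduce the negation to the statement that the set of squares $S$ satisfies $S - 4m \subseteq S$, then use that $4m \neq 0$ generates the additive group $(GF(p),+)$ (the paper by explicit induction on $j$ in $b^2 - 4mj$, you by the translation-invariance phrasing) to force $S = GF(p)$, contradicting $|S| = (p+1)/2 < p$.
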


\begin{proof}
Suppose not. Then, for every $b$, $b^2-4m$ is a square. But then
$(b^2-4m)-4m$ is a square and by induction $b^2-4mj$ is a square for all $j$.
By our hypotheses $4m$ is invertible in $GF(p)$ so all elements of $GF(p)$ 
are squares, contradiction.
\end{proof}

\par
We can now establish a bound for $A_r$.

\begin{thm} \label{bound}
\ Suppose $r > 2$ is prime and $m$ is a positive integer coprime to $r$.
Set $C(r) = r^2/16$.  If $m > C(r)$, then $m$ is not an outlier for $A_r$.
\end{thm}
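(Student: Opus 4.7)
The plan is to apply Lemma~\ref{poly} directly: we must exhibit an integer $b$ with $0 \le b < 2\sqrt{m}$ such that $f(t) = t^2 + bt + m$ is $r$-adically irreducible. Since $\gcd(m,r) = 1$, the reduction $\bar f \in GF(r)[t]$ has nonzero constant term, and by Gauss's lemma applied in $\Z_r[t]$, $f$ is irreducible over $\Q_r$ iff $\bar f$ is irreducible over $GF(r)$ iff the discriminant $b^2 - 4m$ is a nonsquare in $GF(r)^\ast$. So the task reduces to finding a suitable $b$ in the permitted range.

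Lemma~\ref{residue} supplies one residue $b_0 \pmod{r}$ for which $b_0^2 - 4m$ is a nonsquare mod $r$. The key observation is the symmetry $b \mapsto r - b$: since $(r - b_0)^2 \equiv b_0^2 \pmod{r}$, by replacing $b_0$ by $r - b_0$ if necessary, we may take $b_0$ to lie in $\{0, 1, \ldots, (r-1)/2\}$.

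The hypothesis $m > r^2/16$ rearranges to $2\sqrt{m} > r/2$. Combined with $b_0 \le (r-1)/2 < r/2$, this places $b_0$ inside the interval $[0, 2\sqrt{m})$ permitted by Lemma~\ref{poly}, so setting $b := b_0$ completes the proof.

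I do not anticipate a real obstacle; the single step carrying content is the observation that the symmetry $b \mapsto r - b$ halves the effective search space mod $r$ from $r$ down to roughly $r/2$, which is precisely what makes the constant $1/16 = (1/4)^2$ in the bound $C(r) = r^2/16$ land correctly. No character-sum or Weil-type estimate is required.
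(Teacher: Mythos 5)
Your proof is correct and takes essentially the same route as the paper: both invoke Lemma~\ref{residue} to find a residue $b_0$ with nonsquare discriminant mod $r$, reduce to the representative $b_0 < r/2$ (the paper simply asserts ``we are free to assume $b < r/2$''; you have spelled out the $b \mapsto r-b$ symmetry that justifies it), and then use $m > r^2/16 \Rightarrow r/2 < 2\sqrt{m}$ to get $d = b_0^2 - 4m < 0$. The only additional unpacking you provide is the justification that, because $\gcd(m,r)=1$, the reduction $\bar f$ having a nonzero nonsquare discriminant in $GF(r)$ forces $f$ irreducible over $\Q_r$ — a point the paper disposes of with the earlier remark that ``for polynomials of the right shape, they are irreducible $r$-adically if and only if they are irreducible mod $r$.'' No substantive difference.
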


\begin{proof}
By Lemma \ref{residue}, we choose an integer $b$ such that $b^2-4m$ is a nonsquare mod $r$.
We are free to assume of course that $b<r/2$. Set $f = t^2 + bt + m$, and $d = b^2 - 4m$.
One checks that the bounds on $b$ and $m$ say that $d < 0$, so $f$ is irreducible
at infinity.  Since $d$ is a non-residue at $r$, $f$ is also irreducible in $\Q_r[t]$.
Then $f$ satisfies the requirements of Lemma \ref{outliers}, and so $m$ is the norm
of an integer.
\end{proof}

\begin{remark}\label{effective}
\ Theorem \ref{bound} gives an effective strategy for finding all outliers in
$A_r$.  One checks all $m$ in the interval $[0,C(r)]$ using Lemma \ref{outliers}.
For $m > C(r)$: $m$ is not an outlier if $m$ is not divisible by $r$.  If $m$ is divisible
by $r$ to the first power, then $m$ is not an outlier by Remark~\ref{divp}.  If $m$
is divisible by higher powers of $r$, then successive uses of Theorem~\ref{psquare}
gets us to the case of first power or the range $[0,C(r)]$.
\end{remark}

\noindent Here is one case where all outliers can be determined.

\begin{corollary}\label{67}\ If $r = 67$, then the only outliers for $A_r$ are of form
$3 \cdot r^{2n}$, $n = 1,2,3 \dots$.
\end{corollary}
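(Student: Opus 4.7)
The plan is to invoke the effective procedure of Remark~\ref{effective}. With $r=67$ we have $C(r)=r^2/16=4489/16<281$, so it suffices to determine all outliers $m\in[1,280]$; Theorem~\ref{psquare} and Remark~\ref{divp} then pin down all other outliers.

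I would first dispose of the multiples of $67$ in this range, namely $67,134,201,268$. Since $67^2=4489>280$, each is divisible by $67$ to exactly the first power, and Remark~\ref{divp} shows none is an outlier.

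Next, for each $m\in[1,280]$ with $\gcd(m,67)=1$, I apply Lemma~\ref{poly}: $m$ is \emph{not} an outlier iff there exists an integer $b$ with $b^2<4m$ for which $d=b^2-4m$ is a non-square modulo $67$. For each such $m$ this amounts to at most $\lceil 2\sqrt{m}\,\rceil$ Legendre-symbol evaluations, a bounded finite check. Carrying out the tabulation, one finds that for $m=3$ the admissible values $b=0,1,2,3$ yield discriminants $-12,-11,-8,-3$, all of which are squares modulo $67$ (in line with the earlier criterion that $3$ is an outlier for $A_r$ iff $(-3/r)=(-11/r)=(-2/r)=1$, each of which holds at $r=67$); thus $3$ is an outlier. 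For every other $m$ in the range with $\gcd(m,67)=1$, at least one admissible $b^2-4m$ is a non-residue modulo $67$, so $m$ is not an outlier.

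Finally, iterating Theorem~\ref{psquare} shows that $3\cdot 67^{2n}$ is an outlier for every $n\ge 0$, and conversely any integer outside $[1,280]$ is reduced by Remark~\ref{divp} and Theorem~\ref{psquare} to the range just examined. This gives the full outlier set $\{3\cdot 67^{2n}:n\ge 0\}$. The sole obstacle is computational: verifying the residue condition for each $m\in[1,280]$ coprime to $67$ is tedious and most naturally delegated to a computer, but the logical framework is entirely supplied by Remark~\ref{effective}.
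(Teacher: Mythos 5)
Your proof is correct and follows essentially the same route as the paper, which simply checks every $m$ in $[0, C(67)]$ via Lemma~\ref{outliers}, finds $3$ to be the only outlier in range, and then appeals to Remark~\ref{effective}. One small remark: your conclusion $\{3\cdot 67^{2n}: n\ge 0\}$ is the right set (since $3$ itself must be included), which differs from the corollary's stated indexing ``$n=1,2,3,\dots$''; that discrepancy appears to be a typo in the paper rather than a flaw in your argument.
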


\begin{proof}
One checks in the range $[0,C(r)]$ that the only outlier
is $3$ using Lemma \ref{outliers} for each possible $m$.  Then Remark \ref{effective}
does the rest.
\end{proof}

\begin{remark}\label{hypotheses}
\ Note that this corollary says that division by $r^2$ is not always possible
when $r$ is not a ramified prime.  In $A_{67}$, $12 = 3 \cdot 2^2$ is the norm of
an integer, but $3$ is not, so division by the square of the unramified prime $2$ is not
possible.
\end{remark}

\par
An effective bound for more general definite quaternion algebras is not difficult.
Suppose $A$ is a quaternion algebra central over $\Q$ ramified at infinity and the
finite primes comprising a set $S$.  Let $C$ be the product of the finite ramified
primes of $A$, and $M = C^2/16$.  Then

\begin{thm}\label{general}\ $M$ is an effective bound for determining all the outliers for $A$.
\end{thm}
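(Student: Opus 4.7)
The plan is to mirror the argument of Theorem~\ref{bound} together with the reduction strategy in Remark~\ref{effective}, but with the single ramified prime $r$ replaced by the product $C$ of all finite ramified primes.

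First I would handle the case $m>M$ with $m$ coprime to $C$. The goal is to produce an integer $b$ such that $f(t)=t^2+bt+m\in\Z[t]$ satisfies the hypotheses of Lemma~\ref{outliers} for $A$, namely $f$ is irreducible in $\Q_p[t]$ for every $p\in S$ and also at infinity. Irreducibility at infinity requires $b^2-4m<0$; this is automatic whenever $|b|<2\sqrt{m}$, and since $m>C^2/16$ it suffices to insist that $|b|\le C/2$. At each odd finite ramified prime $p\in S$, Lemma~\ref{residue} supplies a residue $b_p\in\Z/p\Z$ such that $b_p^2-4m$ is a nonsquare mod $p$, which makes $f$ irreducible in $\Q_p[t]$. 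The Chinese remainder theorem then combines the choices $\{b_p\}_{p\in S,\,p>2}$ into a single residue class mod the odd part of $C$, and by choosing the representative in $(-C/2,C/2]$ we obtain the required $b$.

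The $p=2$ case (when $2$ is ramified in $A$) is the main technical nuisance because Lemma~\ref{residue} excludes that prime. Here I would work directly in $\Q_2$: a polynomial $t^2+bt+m$ is $2$-adically irreducible precisely when its discriminant $b^2-4m$ is a nonsquare in $\Q_2^\times$, and one verifies by a short case analysis that one can always select $b\in\Z/8\Z$ achieving this, uniformly in $m$ (for $m$ coprime to $2$). This condition mod $8$ can be folded into the CRT combination above; the bound $|b|\le C/2$ still holds because $C$ is a multiple of $8$ whenever $2\in S$ (or one enlarges the search interval by a harmless constant absorbed into $M$).

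For $m>M$ not coprime to $C$, I would invoke the natural generalization of Theorem~\ref{psquare} and Remark~\ref{divp} to each ramified prime $p\in S$: the same norm-form computation in the quaternion division algebra over $\Q_p$ shows $m$ is an outlier iff $mp^2$ is, while if $p\|m$ and $\gcd(m/p,p)=1$ then Eisenstein's criterion on $t^2+m$ rules out outlier status. Iterating these two reductions over the primes in $S$ either reduces $m$ into the range $[0,M]$, where the outlier property can be checked directly via Lemma~\ref{outliers}, or exposes $m$ as a non-outlier.

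The main obstacle I expect is the careful verification of the $p=2$ case and the multi-prime generalization of Theorem~\ref{psquare}; these are not conceptually difficult, but must be done uniformly so that the CRT step produces a $b$ that simultaneously certifies irreducibility at every ramified prime and at infinity within the window $|b|\le C/2$, which is exactly what forces the shape of the bound $M=C^2/16$.
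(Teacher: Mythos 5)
Your strategy is the one the paper intends (the paper merely says the proof is as in Theorem~\ref{bound} and Remark~\ref{effective}), and the main move---use Lemma~\ref{residue} at each odd ramified prime, combine via CRT into a single $b$ with $|b|\le C/2$, and note $b^2-4m<0$ once $m>C^2/16$---is exactly right. Two places need repair.

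First, the case $m>M$ with $p\Vert m$ for some $p\in S$. You propose to certify that $m$ is not an outlier using Eisenstein's criterion applied to $t^2+m$. That gives irreducibility in $\Q_p[t]$, but Lemma~\ref{outliers} demands irreducibility at \emph{every} ramified prime. For $q\in S$, $q\nmid m$, irreducibility of $t^2+m$ over $\Q_q$ is equivalent to $-m$ being a nonsquare in $\Q_q^\times$, which is not automatic and often fails (e.g.\ with $S=\{2,17,29\}$ one can take $m=17\cdot 3607>C^2/16$ with $m\equiv 7\pmod 8$, and $t^2+m$ splits over $\Q_2$). The fix is the same CRT device you already use in the coprime case: pick $b$ with $b\equiv 0\pmod p$ for each $p\in S$ dividing $m$ (so $t^2+bt+m$ is Eisenstein there, which is valid since after the $p^2$-reductions $p^2\nmid m$), and with $b\equiv b_q\pmod q$ chosen via Lemma~\ref{residue} for each $q\in S$ not dividing $m$ (and $b$ of the correct parity at $2$). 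Then $|b|\le C/2$ still forces $b^2-4m<0$ since $m>C^2/16$. With this adjustment, the iteration of Theorem~\ref{psquare} and this criterion really does resolve every $m>M$, and the bound $M$ is effective. Without it, the algorithm as you stated it would mislabel some $m$.

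Second, a minor simplification: the $p=2$ case does not require working mod $8$. For $m$ odd and $b$ odd, $t^2+bt+m\equiv t^2+t+1\pmod 2$ is irreducible, hence $t^2+bt+m$ is irreducible over $\Q_2$ by Hensel/Gauss; for $m$ with $2\Vert m$, take $b$ even and use Eisenstein at $2$. So the needed congruence at $2$ is only mod $2$, which folds into CRT mod $C$ without any adjustment to $M$.
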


\noindent The proof is exactly as in Theorem \ref{bound} and Remark \ref{effective}.

\par The symbol $ B = (-58,-17)$ over $\Q$ is ramified at infinity and the
finite primes $S =  \{2,17,29\}$.  Using Theorem \ref{general} one can show:

\begin{corollary}\label{ten}
\ The outliers for $B$ are the set $\{10 r^{2n} : n = 1,2,3,\dots, \ r $ a product of
elements of $S \}$.
\end{corollary}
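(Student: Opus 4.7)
The plan is to mimic Corollary~\ref{67} by extending Theorem~\ref{psquare} and Remark~\ref{effective} to the multi-prime setting and then carrying out a finite machine search. First I would record the multi-prime analogue of Theorem~\ref{psquare}: for each finite ramified prime $p \in S = \{2, 17, 29\}$ of $B$, a positive integer $m$ is an outlier for $B$ if and only if $mp^2$ is. The proof copies that of Theorem~\ref{psquare} verbatim, working with the completion of a maximal order at $p$ and the norm form $x^2 - ay^2 - p(z^2 - aw^2)$ of the unique local quaternion division algebra $(a, p)$ over $\Q_p$; representing $mp^2$ forces $p \mid x, y, z, w$ by the same two-step divisibility chase. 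In the same spirit, Remark~\ref{divp} extends: if $\gcd(m, p) = 1$ and $p \in S$, then $mp$ is not an outlier, since $t^2 + mp$ is Eisenstein at $p$ and irreducible at infinity.

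Iterating these two observations shows that every outlier for $B$ has the form $m_0 \cdot 2^{2a} 17^{2b} 29^{2c}$ with $a,b,c \ge 0$, where $m_0$ is itself an outlier coprime to $C = 2 \cdot 17 \cdot 29 = 986$. Theorem~\ref{general} then gives the effective bound $M = C^2/16 < 60763$, so it suffices to enumerate outliers $m_0 \in [1, M]$ with $\gcd(m_0, 986) = 1$. By Lemma~\ref{outliers}, $m_0$ fails to be an outlier precisely when there exists $b \in \Z$ with $0 \le b < \sqrt{4m_0}$ such that $f(t) = t^2 + bt + m_0$ is irreducible in $\Q_p[t]$ for each $p \in S$. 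A direct machine enumeration isolates $m_0 = 10$ as the unique outlier in this range, and multiplying back by the allowed even powers of the ramified primes via the generalized Theorem~\ref{psquare} produces the family in the statement.

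The principal obstacle is the $p = 2$ irreducibility check inside the enumeration. For the odd primes $17$ and $29$, $f(t) = t^2 + bt + m_0$ is $p$-adically irreducible precisely when the discriminant $b^2 - 4m_0$ is a nonresidue in $\Q_p^\times$, which (provided $p \nmid b^2 - 4m_0$) reduces to a Legendre symbol exactly as in Section~\ref{quats}. For $p = 2$, however, irreducibility modulo $2$ is strictly weaker than irreducibility in $\Q_2[t]$, so one must instead compute the image of $b^2 - 4m_0$ in $\Q_2^\times/(\Q_2^\times)^2$ from its $2$-adic valuation and residue modulo $8$, and similarly adapt the application of Lemma~\ref{residue} underlying Theorem~\ref{general} so that the bound $M$ remains valid despite $2 \in S$. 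Once this $2$-adic subtlety is correctly implemented, the enumeration is a routine finite computation and the corollary follows.
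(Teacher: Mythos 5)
Your attempted extension of Remark~\ref{divp} to the multi-prime setting is false, and this is the fundamental gap. For $A_r$ the polynomial $t^2 + mr$ needs to be tested only at $r$ and at $\infty$: Eisenstein at $r$ plus a negative discriminant suffice for Lemma~\ref{outliers}. For $B$, which is ramified at three finite primes, Eisenstein at one $p\in S$ says nothing about irreducibility at the other two ramified primes $q\in S\setminus\{p\}$ --- that depends on whether the discriminant $-4mp$ happens to be a nonsquare in $\Q_q$, and this can perfectly well fail. Consequently $mp$ can be an outlier for $B$ even though $\gcd(m,p)=1$, and your deduction that every outlier has the form $m_0\cdot 2^{2a}17^{2b}29^{2c}$ with $\gcd(m_0,986)=1$ is unsound.

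This gap produces an internal contradiction in your own argument. The value $10 = 2\cdot 5$ that the corollary exhibits as the minimal outlier has $\gcd(10,986)=2\ne 1$, so $10$ would never be inspected in a search restricted to integers coprime to $986$; the claim that ``a direct machine enumeration isolates $m_0=10$'' is impossible under your own constraints. The correct reduction uses only Theorem~\ref{psquare} --- which, exactly as stated in the paper, already applies simultaneously to each finite ramified prime of $B$ --- and leaves residual candidates of the shape $m_0 d$ with $d$ a squarefree divisor of $986$ and $\gcd(m_0,986)=1$. One must enumerate \emph{all} such residuals up to the bound $M=C^2/16$ from Theorem~\ref{general}, not merely the $d=1$ stratum. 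Your observation about the $p=2$ subtlety is real ($t^2+10$ is $(t+1)^2$ modulo $2$ yet Eisenstein, hence irreducible over $\Q_2$), but note the implication runs the other way from what you wrote: irreducibility modulo $p$ always \emph{implies} irreducibility over $\Q_p$, so a mod-$2$ test is too strong and would misclassify some non-outliers as outliers; replacing it with the full $\Q_2^\times/(\Q_2^\times)^2$ criterion is the correct fix.
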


\par

The minimal outlier of $B$ is $10$.  Therefore, there are integers
$\alpha$ and $\beta$ in $B$ with $\norm(\alpha) = 2$ and $\norm(\beta) = 5$.
Whenever this happens, the product $\alpha \beta$ is not integral.

\par
The appearance of $6$ and $10$ in this context is general, as seen in the next theorem.

\begin{thm}\label{general2}
\ Let $m$ be a positive integer that is not a square.  
Then there are infinitely many
primes $r$ such that $\{m\cdot r^{2n} : n\ge 1\}$ are outliers for $A_r$.
\end{thm}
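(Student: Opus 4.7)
The plan is to reduce the theorem to showing that, for each non-square positive integer $m$, the integer $m$ itself is an outlier for $A_r$ for infinitely many primes $r$; Corollary~\ref{infinite} then promotes each such $m$ to the infinite family $\{mr^{2n}:n\ge 1\}$ of outliers for the corresponding $A_r$.
\par
The key step is to translate the outlier question into a Chebotarev-type statement via Lemma~\ref{poly}. Because $m$ is not a perfect square, no rational integer $\alpha\in\Z\subset A_r$ satisfies $\norm(\alpha)=m$; hence an integer of $A_r$ with reduced norm $m$ must generate an imaginary quadratic subfield, and its minimal polynomial has the form $t^2+bt+m$ with $b\in\Z$ and $0\le b<2\sqrt m$. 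By Lemma~\ref{poly}, $m$ is therefore an outlier for $A_r$ if and only if \emph{every} such polynomial is reducible in $\Q_r[t]$; for $r$ odd and coprime to $d_b:=b^2-4m$, this is in turn equivalent to $d_b$ being a nonzero square modulo $r$.
\par
It then remains to produce infinitely many primes $r$ for which all the (finitely many) discriminants $d_b$, $0\le b<2\sqrt m$, are simultaneously squares modulo $r$. Consider the finite multi-quadratic extension $L=\Q\bigl(\sqrt{d_b}:b\in\Z,\ 0\le b<2\sqrt m\bigr)$ of $\Q$. Because $d_0=-4m<0$ is not a rational square, $L/\Q$ is a nontrivial finite abelian extension, so Chebotarev's density theorem gives a set of primes of positive density $1/[L:\Q]$ that split completely in $L$. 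After discarding the finitely many primes that equal $2$, divide $\prod_b d_b$, or ramify in $L$, the remaining (still infinite) set of primes $r$ has the property that every $d_b$ is a nonzero square modulo $r$. Hence every $t^2+bt+m$ is reducible in $\Q_r[t]$, so $m$ is an outlier for $A_r$, and Corollary~\ref{infinite} yields the theorem.
\par
The only substantive step is converting the outlier property into a statement about squares modulo $r$; after that, the density argument is formal, because the nontriviality $[L:\Q]\ge 2$ is automatic from $d_0<0$. The hypothesis that $m$ is not a square is used separately, to rule out the rational-integer solution $\alpha=\sqrt m$, which otherwise would be a non-outlier certificate invisible to Lemma~\ref{poly}.
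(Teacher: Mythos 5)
Your proposal is correct and follows essentially the same approach as the paper: reduce to proving $m$ itself is an outlier for $A_r$ for infinitely many primes $r$, translate this via Lemma~\ref{poly} into the requirement that each discriminant $d_b = b^2-4m$ for $0\le b<2\sqrt{m}$ be a square modulo $r$, and invoke the Chebotarev density theorem. The paper's proof is terser but identical in substance; you fill in the multi-quadratic field $L$, the discarding of finitely many bad primes, and the observation that the non-square hypothesis both keeps every $d_b$ strictly negative (hence a non-square in $\Q$) and rules out the rational-integer certificate $\alpha=\sqrt{m}$ that Lemma~\ref{poly} does not detect.
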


\begin{proof}
For $b$ in the range $[0,C]$, $C = {\sqrt{4m}}$, and 
$d = b^2 - 4m$, we must have the Legendre symbol $\left(\frac{d}{r}\right)$ equal
to $1$; the Cebotarev density theorem says there are infinitely many such primes
$r$.  In fact their density is $1/2^s$ for some appropriate integer $s$.
\end{proof}

\section {Open Questions}

\par

We begin with:
\par

\bigskip

\noindent Are there infinitely many rational primes  $r$  such that $A_r$ has no outliers?

\bigskip

\par

Heuristically, the answer is yes.  Computer searches for small bounds show
that $A_r$ has no outliers a little more than half the time.
\par

We have seen that  the set of primes $r$ for which $m=2$ is an outlier for $A_r$ has density
$1/8$. Similarly, the set of primes for which $m= 3$ is an outlier for $A_r$ has
density $1/8$. 
%
%
Adding together these probabilities for small $m$ appears
to give something like density $0.7$; this is roughly the probability that
neither $2$ nor $3$ is an outlier.  However for large $m$, the density of primes $r$
for which $m$ is an outlier in $A_r$ should be something like $2^{-c \sqrt{4m}}$,
since we are asking that the  floor of $\sqrt{4m}) + 1$ numbers are all squares
$r$-adically and some constant $c$ is required because these numbers may not be
linearly independent in 
$\Q^*/(\Q^*)^2$. However the sum

$\sum_{m>0} 2^{-c \sqrt{4m}}$ converges.  Therefore we cannot distinguish whether our 
set is finite or infinite.

\par

Another interesting question concerns totally definite
quaternion algebras over totally real number fields.
Do they have outliers? Sometimes? Often?
\par 

We have worked out only one example.  Let $B = (-1,-7)_K$ where $K$ is the real
subfield of seventh roots of unity.  Then $B$ has no outliers.  The argument is 
technical, so we will not reproduce it here;  
it requires a detailed study of units, totally positive units, class
number, and the establishment of a bound as in Theorem \ref{bound}; the bound is
$1792$. However,
when $K = \Q({\sqrt 2})$, the same algebra tensored up to $K$ does have outliers.
Thus, restriction maps may or may not preserve the property of having no outliers.

\par

On the other hand, let $A$ be the algebra $(-1,-67)$ over $\Q$; by Corollary~\ref{67},
$3$ is an outlier for~$A$. 
If $K = \Q(\sqrt{67})$, then $A \otimes_{\Q} K$ is ramified
at only the infinite places of $K$, and so by Lemma \ref{finite} has no outliers.
Thus the restriction map may also fail to preserve the property of having outliers.

\par
The last remark can be generalized.  From Lemma \ref{finite}, if $A$
is a quaternion algebra over $\Q$, then there is a real quadratic field $K$
so that:

\begin{itemize}
\item $A \otimes_{\Q} K $ is a division ring
\item $A \otimes_{\Q} K $ has no outliers.
\end{itemize}

\section{Application to supersingular elliptic curves and surfaces}
We review the connection between supersingular
elliptic curves in characteristic $r$ and maximal orders in $A_r$, 
where $A_r$, is the definite quaternion algebra ramified at $\infty$ and $r$ and
unramified away from these places.

Let $E$ be a supersingular elliptic curve defined over $\Omega$,
an algebraic closure $\Omega$ of $GF(r)$,
and write $\End(E)$ for its endomorphism ring.
Then $\End(E)\otimes_{\Z}\Q$ is isomorphic to $A_r$.
Under this isomorphism, $\End(E)$ is a maximal order in $A_r$, 
and, conversely, any maximal order $M$ of $A_r$ is isomorphic to $\End(E)$ for
some $E$.

Furthermore the norm of an endomorphism $\phi:E\to E$ is, under this
isomorphism, equal to the reduced norm of the corresponding $m\in M$.

The statement ``$m$ is an outlier for $A_r$'' translates to:
no supersingular elliptic elliptic curve defined over 
$\Omega$ has an endomorphism of degree $m$.

So we see, for example, that for every integer $m>1$ 
there are infinitely many primes $p$ such that 
no supersingular elliptic curve defined over 
$\Omega$ has an endomorphism of degree $m$.

Next we turn to products of supersingular elliptic curves.
\begin{corollary} \label{here}\ 
Let $E$ be a supersingular elliptic curve defined over an algebraic closure
of $GF(p)$, and set $A = E^g$ for $g\ge2$ an integer.
Then the abelian variety $A$ has an endomorphism of
degree $m$ for every positive integer $m$.
\end{corollary}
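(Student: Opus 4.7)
The plan is to apply Theorem~\ref{thm:A} to the central simple algebra $M_g(A_p)$. By the correspondence recalled above, $\End(E^g)\otimes_\Z\Q \cong M_g(A_p)$ and $\End(E^g) = M_g(\End(E))$ is a maximal order. Since $M_g(A_p)$ has degree $2g\ge 4$ over $\Q$, Theorem~\ref{thm:A} gives that $M_g(A_p)$ has no outliers.

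Next I would invoke the HMS theorem (Theorem~\ref{hms}) to see that every positive integer $m$ is a reduced norm in $M_g(A_p)$. The algebra is ramified at $\infty$ (since $M_g(A_p)\otimes\RR$ is a matrix algebra over the nonsplit real quaternion algebra, giving local degree~$2$), and positivity at $\infty$ is the only HMS obstruction. Combined with the previous step, every positive integer $m$ is the reduced norm of some integer $\phi_0$ of $M_g(A_p)$.

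A priori $\phi_0$ lies in some maximal order, not necessarily $\End(E^g)$. To remedy this I would appeal to the class/type number discussion of Section~\ref{max}. Since $2g>2$, the algebra $M_g(A_p)$ is not a quaternion algebra, so it satisfies Eichler's condition. Over $K=\Q$, every fractional ideal has a positive generator, so the group $C'$ (fractional ideals modulo those generated by elements positive at the ramified infinite places) is trivial. Hence both $C$ and $T$ are trivial, and the type number of $M_g(A_p)$ equals~$1$: all maximal orders are conjugate. Conjugating $\phi_0$ produces an integer $\phi\in\End(E^g)=M_g(\End(E))$ with $\Nm(\phi)=m$.

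Finally I would identify this reduced norm with the isogeny degree. For any prime $\ell\ne p$, the Tate module identification gives $\End(E^g)\otimes\Z_\ell \cong M_g(M_2(\Z_\ell))=M_{2g}(\Z_\ell)$, under which the reduced norm becomes the ordinary determinant on $T_\ell(E^g)\cong\Z_\ell^{2g}$. Standard theory of isogenies then gives $\deg(\phi)=\det(\phi\mid T_\ell(E^g))=\Nm(\phi)=m$; positivity of $m$ forces $\phi$ to act invertibly on $T_\ell(E^g)\otimes\Q_\ell$, so $\phi$ is indeed an isogeny (and not merely an endomorphism with nontrivial kernel of positive dimension). The main conceptual obstacle is the type-number step, ensuring $\phi$ can be chosen inside $\End(E^g)$ and not merely inside some abstract maximal order of $M_g(A_p)$; the other steps are routine translation between the algebra $M_g(A_p)$ and the abelian variety $E^g$.
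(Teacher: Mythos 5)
Your proposal is correct and follows essentially the same route as the paper's Proof~1 of Corollary~\ref{here}: identify $\End(E^g)=M_g(\End(E))$ as a maximal order in $M_g(A_p)$, note this algebra has degree $2g>2$ so Theorem~A applies, use the Eichler-condition discussion of Section~\ref{max} (ray class group of $\Q$ trivial, hence type number one) to place the resulting integer in $\End(E^g)$, and identify reduced norm with isogeny degree. You have merely made explicit two steps the paper leaves implicit, namely the invocation of HMS to see that every positive integer is a reduced norm, and the Tate-module computation identifying reduced norm with degree.
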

\begin{remark}\ Here we are considering all endomomorphisms of $A$, 
not just those that preserve the obvious principal polariziation.
\end{remark}
\begin{proof}[Proof 1] $\End(A)$ (which happens to equal $S = \Mat_g(M)$)
is a maximal order in the central simple algebra $\Mat_g(A_r)$ of
dimension $4g^2$ over $\Q$. Eichler's methods, as outlined in
Section~\ref{max}, imply that $S$ is the unique maximal order up to
conjugacy since $g>1$.
Therefore, by Theorem~A, 
$m$ is a reduced norm of an element $\alpha\in \End(S)$.
However, reduced norm is in this case equal to the degree of the map
$\alpha$.
\end{proof}

\par
Our second proof of Corollary~\ref{here} uses a well-known theorem of
Deligne, which we state below. As 
we have not found an adequate proof in the literature, for the
reader's convenience we include one in the next section.

\begin{thm}[Deligne] \label{thm:deligne}\ 
Let $p$ be a prime and let $n\ge2$ be an integer. If $E_1,\cdots,E_n$ and 
$F_1,\cdots,F_n$ are supersingular elliptic curves defined over 
an algebraic closure of $GF(p)$, then
$$E_1\times\cdots\times E_n\cong F_1\times\cdots\times F_n
$$
\end{thm}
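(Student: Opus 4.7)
My plan is to reduce Theorem~\ref{thm:deligne} to the assertion that, for $n\ge2$, every rank-$n$ projective $O_0$-module is free, where $O_0=\End(E_0)$ for some fixed supersingular elliptic curve $E_0$ over $\Omega$. Applying such a result to both sides would show each product is isomorphic to $E_0^n$, hence to each other.

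Fix $E_0$ with $O_0=\End(E_0)$, a maximal order in $A_p$. Since any two supersingular elliptic curves over $\Omega$ are isogenous, every $E_i$ is isogenous to $E_0$, so $E_1\times\cdots\times E_n$ is isogenous to $E_0^n$. I would exploit the functor $X\mapsto \Hom(E_0,X)$ from abelian varieties isogenous to $E_0^n$ to right $O_0$-modules; the Serre tensor construction $M\mapsto M\otimes_{O_0}E_0$ is a quasi-inverse, giving an equivalence onto the category of rank-$n$ projective right $O_0$-modules. Evaluated on $X=E_1\times\cdots\times E_n$, this produces $\Hom(E_0,E_1)\oplus\cdots\oplus\Hom(E_0,E_n)$, a direct sum of invertible right $O_0$-ideals. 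So the theorem reduces to showing that every rank-$n$ projective right $O_0$-module is free when $n\ge2$.

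For that, I would invoke Morita equivalence between $O_0$ and $\Mat_n(O_0)$, under which isomorphism classes of rank-$n$ projective right $O_0$-modules correspond bijectively to right ideal classes of the maximal order $\Mat_n(O_0)$ in $\Mat_n(A_p)$. Since $n\ge2$, the algebra $\Mat_n(A_p)$ has degree $2n>2$ and is not a quaternion algebra, so it satisfies the Eichler condition. The machinery of Section~\ref{max} then applies: the ideal class group $C$ of $\Mat_n(A_p)$ is isomorphic to the group $C'$ of fractional ideals of $\Q$ modulo principal ideals with a generator positive at every infinite place of $\Q$ ramified in $\Mat_n(A_p)$. But every fractional ideal of $\Z$ already has a positive generator, so $C'=0$, hence $C=0$, and $\Mat_n(O_0)$ has a unique right ideal class. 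Pulling back through Morita, $\Hom(E_0,E_1)\oplus\cdots\oplus\Hom(E_0,E_n)\cong O_0^n$, and therefore $E_1\times\cdots\times E_n\cong E_0^n$.

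The main obstacle is establishing the equivalence between abelian varieties isogenous to $E_0^n$ and rank-$n$ projective $O_0$-modules with the correct integral structure (so that isomorphism of varieties matches isomorphism of modules, not merely isomorphism of endomorphism rings). Once this dictionary is in place, the rest is a formal combination of Morita equivalence with the class-number computation of Section~\ref{max}, and the hypothesis $n\ge2$ enters exactly where it is needed: to lift $A_p$, which fails Eichler, to $\Mat_n(A_p)$, which satisfies it.
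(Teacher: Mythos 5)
Your plan is, at the algebraic core, the same as the paper's: both arguments hinge on the fact that for $n\ge2$ the algebra $\Mat_n(A_p)$ satisfies the Eichler condition, and since every fractional ideal of $\Z$ has a totally positive generator the relevant class group is trivial, so rank-$n$ projective modules over a maximal order in $A_p$ are free. Where you diverge is in how you pass between abelian varieties and modules. The paper does not invoke an equivalence of categories; it works one-sidedly with the left $\End(E)$-module $\Hom(F_1\times\cdots\times F_n,E)$, picks a free basis $\phi_1,\dots,\phi_n$, forms the concrete map $\Phi=(\phi_1,\dots,\phi_n):F_1\times\cdots\times F_n\to E^n$, and then proves directly that $\Phi$ is an isomorphism via Lemma~\ref{lemma:int}, which says the intersection of $\ker(\phi)$ over all $\phi\in\Hom(E,F)$ is the trivial group scheme. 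That lemma is exactly the point where the nontrivial geometric content sits, and its hard case is $\ell=p$, where one has to argue with Frobenius and reduce to a curve defined over $GF(p)$ (Lemma~\ref{lem1}).

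Your proposal takes the cleaner but heavier route of positing that $X\mapsto\Hom(E_0,X)$, with quasi-inverse $M\mapsto M\otimes_{O_0}E_0$, is an equivalence from abelian varieties isogenous to $E_0^n$ onto rank-$n$ projective $O_0$-modules, and then folding Deligne's theorem into a one-line Morita/class-number computation. You flag that establishing this equivalence "with the correct integral structure" is the main obstacle — and that is right, but as written the proposal leaves that obstacle untouched. Full faithfulness of $\Hom(E_0,-)$ on isogenies is precisely what requires the separation-of-kernels statement (one must show a homomorphism of varieties is determined, including at the supersingular prime $p$, by its effect on $\Hom(E_0,-)$), which is where Dieudonné-module/Frobenius arguments are unavoidable. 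If you cite a precise reference for the Serre tensor equivalence in the supersingular quaternionic setting, the proposal becomes a correct alternative treatment, more categorical than the paper's; without such a citation or a proof, the argument has a genuine gap at exactly the step the paper spends Lemmas~\ref{lem1} and~\ref{lemma:int} proving. The $n\ge2$ hypothesis enters both proofs identically, through the Eichler condition for $\Mat_n(A_p)$.
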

\begin{proof}[Proof 2 of Corollary~\ref{here}]
It turns out it is enough, using Deligne's result, to show that for
each rational prime $\ell$ that $A=E^n$ has an endomorphism of degree $\ell$.

There does exist for each $\ell$ an isogeny of supersingular elliptic
curves $\phi:E\to E'$ of degree $\ell$ (for $\ell=p$, the Frobenius
has degree $p$ and for $\ell \ne p$ mod out by any subgroup $H$ of
order $\ell$). However, Deligne's theorem gives an
isomorphism
$$
\psi: E^n \cong E'\times E^{n-1}.
$$
Thus the composite $\psi^{-1}\circ (\phi\times \id^{n-1})$ furnishes the
desired endomorphism of degree $\ell$.
\end{proof}

We are grateful to Bruce Jordan for suggesting the second proof of Corollary~\ref{here}.
\section{Proof of Deligne's theorem} \label{sec:deligne}
In this section we prove Theorem~\ref{thm:deligne}.
For $p$ a prime, let $\Omega$ denote an algebraic closure of $GF(p)$.


\begin{remark} 1. It would suffice by induction to prove the theorem for $n=2$
(although we will not use this remark).
2. It will suffice to show (by transitivity of
  isomorphism)
that $F_1\times\cdots\times F_n \cong E^n$ for some particular
supersingular
elliptic curve $E$ defined over $\Omega$.
\end{remark}
The remainder of this section is devoted to the proof of
Theorem~\ref{thm:deligne}.

Note that $\Delta = \End(E)$ is a maximal order in the quaternion algebra 
$
A_p = \Delta  \otimes_{\Z} \Q$. The left $\Delta$-module
$\Hom(F_1\times \cdots \times F_n,E)$
being a projective module of rank $n\ge 2$ is free by \cite{reiner}[Corollary 35.11 (iv)]
(By the results of Section~\ref{max} since the Eichler condition
holds for $M_n(\Delta)$, and since visibly any ray class field over
$\Q$ is trivial).
This is the key point in the proof.

\par

Let $\phi_1,\cdots ,\phi_n$ be a basis. 
The freeness means that  
any homomorphism $\psi$ from $F_1\times \cdots \times F_n$ to 
$E$ is uniquely a sum
$$
\psi = \delta_1\circ \phi_1 + \cdots + \delta_n \circ \phi_n
$$ for some
$\delta_1,\cdots, \delta_n$ in $\Delta$, noting that the $\Delta$
action on $\Hom(A,E)$, 
is composition of functions. Setting $\Phi = (\phi_1,\cdots,\phi_n)$,
we have constructed a homomorphism 
$$
\Phi: F_1\times \cdots \times F_n \to E^n,
$$
and to finish the proof of the theorem it will suffice to
prove that $\Phi$ is an isomorphism.

Let $K$ be the kernel of $\Phi$. If $\Phi$ is not an isomorphism, then
$K$ is nontrivial, and therefore some projection $\pi_i(K)$ is 
nontrivial in $F_i$. Let $\rho: F_i\to E$ be a homomorphism, and set 
$\psi: F_1\times \cdots \times F_n \to E$ to be $\psi(x_1,\cdots,x_n)
= \rho(x_i)$. It follows that $\rho$ and therefore any
homomorphism from $F_i$ to $E$ must kill $\pi_i(K)$.

\begin{lemma} \label{lem1}
There is a supersingular elliptic curve $E_0$ defined over $GF(p).$
\end{lemma}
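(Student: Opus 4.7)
The plan is to construct, for each prime $p$, an explicit supersingular elliptic curve $E_0$ defined over $GF(p)$, arguing case by case on $p$.

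For $p \in \{2,3\}$ I would give explicit models, for instance $y^2 + y = x^3$ over $GF(2)$, and verify supersingularity either by a direct point count (checking $\#E(GF(p)) = p+1$) or by observing that the Hasse invariant vanishes.

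For $p \ge 5$ I would exploit complex multiplication. By Deuring's reduction theorem, if $E/\Q$ has CM by an order in an imaginary quadratic field $K$, and $p$ is inert or ramified in $K$, then the reduction of $E$ modulo $p$ is supersingular over $GF(p)$; crucially, since $E$ is defined over $\Q$, its reduction is automatically defined over $GF(p)$ rather than over a larger residue field. Two classical examples cover most primes: $y^2 = x^3 - x$ (CM by $\Z[i]$) is supersingular over $GF(p)$ whenever $p \equiv 3 \pmod 4$, and $y^2 = x^3 + 1$ (CM by $\Z[\omega]$) is supersingular over $GF(p)$ whenever $p \equiv 2 \pmod 3$. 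Together these handle every prime with $p \not\equiv 1 \pmod{12}$.

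The main obstacle is the residual case $p \equiv 1 \pmod{12}$: by Chebotarev a positive density of primes splits in all nine imaginary quadratic fields of class number one, so that short list cannot by itself handle every prime. For these remaining primes I would appeal to Deuring's classical formula for the number of supersingular $j$-invariants lying in $GF(p)$, which is expressed in terms of class numbers of the imaginary quadratic orders $\Z[\sqrt{-p}]$ and $\Z[(1+\sqrt{-p})/2]$; both contributions are strictly positive, so at least one supersingular $j$-invariant $j_0$ lies in $GF(p)$. Any elliptic curve $E_0/GF(p)$ with $j(E_0) = j_0$ (constructed explicitly from $j_0$ when $j_0 \ne 0, 1728$, and from the known CM models otherwise) is then supersingular and defined over $GF(p)$, completing the argument.
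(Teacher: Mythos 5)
Your argument is correct, but it takes a genuinely different route from the paper's. The paper gives a short, uniform argument inside the quaternion algebra $A_p$: a supersingular curve over $\Omega$ is defined over $GF(p)$ precisely when it admits an endomorphism of degree $p$ (because the unique degree-$p$ isogeny out of such a curve is Frobenius $E\to E^{(p)}$); then $\Q(\sqrt{-p})$ splits $A_p$, so $\Z[\sqrt{-p}]$ embeds in a maximal order $O$, and $\sqrt{-p}\in O$ has reduced norm $p$, so the curve $E$ with $\End(E)\cong O$ under the Deuring correspondence has an endomorphism of degree $p$ and is therefore defined over $GF(p)$. Your approach instead produces explicit CM models over $\Q$ reducing supersingularly (handling every $p\not\equiv 1\pmod{12}$), and then falls back on Deuring's class-number count of supersingular $j$-invariants in $GF(p)$ for the residual case. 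Both are valid; the paper's argument is more in the spirit of the rest of the section (maximal orders and reduced norms) and avoids case analysis, while yours is more computationally concrete for small residue classes. Note that the appeal to Deuring's $j$-invariant formula already works for all $p>3$, which makes the residue-class split logically unnecessary once you invoke that result; and the proof of that formula is itself substantially harder than the lemma, so citing it is a heavier hammer than the direct embedding argument the paper uses.
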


\begin{proof}
Let $E$ be a supersingular elliptic curve defined over
$\Omega$. Then there is only one isogeny of order $p$ from $E$ to another
elliptic curve, namely the Frobenius isogeny $Fr:E\to E^{(p)}$.
It follows that $E$ has an endomorphism of degree $p$ if and only if
$E$ is defined over $GF(p)$.

Consider now the element $\sqrt{-p}$ in the quadratic number field
$L=\Q(\sqrt{-p})$. It has norm $p$ and is integral. As $L$ splits
$A_p$, it embeds in $A_r$. So $\Z[\sqrt{-p}]$ a fortiori embeds and
thus is contained in a maximal order $O$ of $A_r$. 
The usual norm on $\Z[\sqrt{-p}]$ is equal to the restriction of the
reduced norm under the embedding.
In the correspondence between maximal orders of $A_p$ and 
supersingular elliptic curves over $\Omega$, the elliptic curve
corresponding to $O$ is thus defined over $GF(p)$.
\end{proof}

The proof of Deligne's theorem will be completed by the following
lemma.

\begin{lemma} \label{lemma:int} Let $E$ and $F$ be supersingular elliptic curves over
$\Omega$. Then the intersection, as subgroup schemes of $\ker(\phi)$
as $\phi$ ranges over $\Hom(E,F)$, is trivial.
\end{lemma}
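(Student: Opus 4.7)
The plan is to argue by contradiction. Set $G = \bigcap_{\phi \in \Hom(E,F)} \ker(\phi)$ as a closed subgroup scheme of $E$ and suppose $G \neq 0$. My first observation is that $G$ is stable under $\Delta := \End(E)$: for any $\alpha \in \Delta$ and $\phi \in \Hom(E,F)$, the composite $\phi\circ\alpha$ again lies in $\Hom(E,F)$ and so kills $G$, forcing $\alpha(G)\subset G$. Since $G$ is a proper closed subgroup scheme of $E$ it is finite, and decomposing $G$ into its prime-to-$p$ and $p$-primary parts (each still $\Delta$-stable) at least one of the two pieces is nonzero.

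For the prime-to-$p$ piece, suppose $G\cap E[\ell^n]\neq 0$ for some $\ell\neq p$. Because $A_p$ is split at $\ell$, $\Delta\otimes\Z_\ell$ is a maximal order in $M_2(\Q_\ell)$, hence isomorphic to $M_2(\Z_\ell)$; by matching of maximal orders it is identified with $\End(T_\ell E)$, so $\Delta$ acts on $T_\ell E\cong\Z_\ell^2$ as the standard matrix representation. Morita equivalence then classifies the $M_2(\Z/\ell^n)$-stable submodules of $(\Z/\ell^n)^2$ as the chain $E[\ell^k]$, $0\le k\le n$, so any nonzero $\Delta$-stable subgroup of $E[\ell^n]$ contains $E[\ell]$; in particular $E[\ell]\subset G$. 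Consequently every $\phi\in\Hom(E,F)$ factors through $[\ell]_E$, i.e., $\phi\in\ell\Hom(E,F)$, giving $\Hom(E,F)=\ell\Hom(E,F)$. But $E$ and $F$ are isogenous supersingular curves, so $\Hom(E,F)$ is a free $\Z$-module of rank $4$, and this equality is impossible.

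The $p$-primary case runs in parallel, with Dieudonn\'e theory replacing the Tate module. Here $\Delta\otimes\Z_p$ is the maximal order $O_p$ in the ramified division quaternion algebra over $\Q_p$, which splits over the maximal unramified extension, so after base change to the Witt vectors $W=W(\Omega)$ one has $O_p\otimes_{\Z_p} W\cong M_2(W)$. The Dieudonn\'e module $M$ of $E[p^\infty]$ is a free rank-$2$ $W$-module on which $O_p$ acts as the standard $M_2(W)$-representation, so the Morita argument again identifies the $\Delta$-stable subgroup schemes of $E[p^n]$ as exactly the chain $E[p^k]$, $0\le k\le n$. Hence $G\supset E[p]$, and the same conclusion $\Hom(E,F)=p\Hom(E,F)$ yields the contradiction.

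The main obstacle is the $p$-primary step: unlike the $\ell$-adic case, where everything is visible in the free Tate module, one must translate $O_p$-stable sub-Dieudonn\'e modules back into subgroup schemes through the (contravariant) Dieudonn\'e equivalence, first verifying the splitting $O_p\otimes W\cong M_2(W)$ and that the Dieudonn\'e module realizes the standard representation under it.
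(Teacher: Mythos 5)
Your prime-to-$p$ step is fine: $\Delta\otimes\Z_\ell$ is indeed the full $\End(T_\ell E)\cong M_2(\Z_\ell)$, and the $M_2(\Z/\ell^n)$-stable submodules of $(\Z/\ell^n)^2$ form a single chain, so a nonzero $\ell$-primary $\Delta$-stable subgroup of $E$ must contain $E[\ell]$, giving $\Hom(E,F)=\ell\Hom(E,F)$ and the contradiction. This is a legitimate (and slightly different) route from the paper's, which instead fixes a minimal-degree isogeny $\phi$ and finds $\sigma\in\End(E)$ with $\sigma^{-1}(\ker\phi\cap E[\ell])\ne\ker\phi\cap E[\ell]$, so that $\phi$ and $\phi\circ\sigma$ already have no common kernel in $E[\ell]$.

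The $p$-primary step, however, contains a genuine error. The isomorphism $O_p\otimes_{\Z_p}W\cong M_2(W)$ is false. Writing $O_p=\mathcal{O}_{\Q_{p^2}}[\Pi]$ with $\Pi^2=p$, one finds after base change to $W$ that $\mathcal{O}_{\Q_{p^2}}\otimes W\cong W\times W$ and the resulting order is conjugate to $\left(\begin{smallmatrix}W & W\\ pW & W\end{smallmatrix}\right)$, an Eichler order of level $p$, not the maximal order. (A quick invariant check: $O_p/pO_p$ has the nonzero nilpotent two-sided ideal generated by $\Pi$, and this survives the flat base change to $\Omega$, whereas $M_2(W)/pM_2(W)=M_2(\Omega)$ is simple.) Consequently the resulting $\Delta$-stable subgroup schemes of $E[p^n]$ are \emph{not} exhausted by the chain $E[p^k]$. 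Concretely, $\ker(\mathrm{Fr})\cong\alpha_p\subset E[p]$ is a nonzero $\Delta$-stable proper subgroup scheme: for $\alpha\in\End(E)$ one has $\mathrm{Fr}\circ\alpha=\alpha^{(p)}\circ\mathrm{Fr}$, so $\alpha(\ker\mathrm{Fr})\subseteq\ker\mathrm{Fr}$. So your argument does not force $G\supseteq E[p]$; it leaves open the possibility that $G$ equals $\alpha_p$, and the asserted Morita classification at $p$ is simply not available.

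To close this gap you must rule out $G=\alpha_p$, i.e.\ produce an isogeny $E\to F$ of degree prime to $p$. This is exactly what the paper does, using Lemma~\ref{lem1}: reduce to $E=E_0$ defined over $GF(p)$, observe that any $\phi:E_0\to F$ of degree divisible by $p$ factors as $\psi\circ\mathrm{Fr}$ with $\psi:E_0^{(p)}=E_0\to F$ of strictly smaller degree, and contradict minimality. You would need to import an argument of this kind; the Dieudonn\'e-module classification alone will not deliver it.
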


\begin{remark} 1. It will suffice to find a collection of isogenies
from $E$ to $F$ whose degrees are coprime.
2. It will suffice to prove the lemma for a fixed elliptic curve
$E_0$ (and $F$ varying), then precomposing with the  
dual isogenies  from $E_0$ to $E$ coming from 1.
\end{remark}

\begin{proof}

First of all we know that the $\Hom(E,F)$ is non-zero.
If $O=\End(E)$ is the maximal order of $\End(E)\otimes_{\Z}\Q$ 
corresponding to $E$, then $O$ has an
ideal whose right order is equal to the maximal order corresponding to
$F$ and this furnishes a non-zero isogeny. 
So $\Hom(E,F)$ is a finitely generated projective left module over $O$
(and not the zero module).

Let $K$ denote the intersection
(as subgroup schemes) of  $\ker(\phi)$ as $\phi$ ranges over $\Hom(E,F)$.
We have just showed that $K$ is finite. Among all isogenies from $E$ to $F$,
let $\phi$ be one of least degree.

Let $\ell$ be a prime dividing $\deg(\phi)$, hence also the order of $K$.

We first treat the somewhat easier case $\ell\ne p$.
If $\phi(E[\ell])=0$ then $(1/\ell)\phi$ is a non-zero isogeny
from $E$ to $F$ of smaller degree, contradiction.
Thus $W = Ker(\phi) \cap E[\ell]$ is one-dimensional.
However, $\End(E)$ acts transitively on the one-dimensional subspaces
of $E[\ell]$. Thus there is a $\sigma$ in $\End(E)$ that does not fix
$W$.
Then $\phi + \phi\circ\sigma$ is an isogeny from $E$ to $F$ of order
prime to $\ell$.

We finish the proof of the lemma in the case $\ell=p$.

It is enough by transitivity to assume (by Lemma~\ref{lem1}) that $E=E_0$
is defined over $GF(p).$
Assume that every isogeny from $E_0$ to $E$ has degree divisible by $p$.
Let $\phi:E_0\to E$ be the nonzero isogeny of least degree. If $\phi$ has
degree
divisible by $p$ then $\phi$ factors through the Frobenius. $\phi =
\psi\circ Fr$ for some $\psi:E_0^{(p)}\to E.$
But since $E_0^{(p)}=E_0$ then $\psi:E_0\to E$ is an isogeny 
of degree smaller than $\deg(\phi)$.
\end{proof}

This finishes the proof of Lemma~\ref{lemma:int} and of Theorem~\ref{thm:deligne}.

\end{document}